 \newtheorem{theorem}{Theorem}[section]
 \newtheorem{corollary}[theorem]{Corollary}
 \newtheorem{lemma}[theorem]{Lemma}
 \newtheorem{proposition}[theorem]{Proposition}
 \theoremstyle{definition}
 \theoremstyle{remark}
 \newtheorem{remark}[theorem]{Remark}
 \newtheorem{example}{Example}
 \numberwithin{equation}{section}
 \newcommand{\R}{\mathbb{R}}
  \newcommand{\C}{\mathbb{C}}
  \newcommand{\zb}{\overline{z}}
\begin{document}

%
%
%

\title[ Even Dimensional Improper Affine Spheres]
 {Even Dimensional Improper Affine Spheres}

\author[M. Craizer]{Marcos Craizer}

\address{%
Departamento de Matem\'{a}tica - PUC-Rio\br
Rio de Janeiro\br
Brazil}

\email{craizer@puc-rio.br}

\thanks{The first author thanks CNPq, the second author thanks NCN, and the third author thanks Fapesp, for financial support during the preparation of this manuscript.}

\author[W. Domitrz]{Wojciech Domitrz}
\address{Faculty of Mathematics and Information Science \br
Warsaw University of Technology \br
ul. Koszykowa 75, 00-662 Warszawa\br
Poland}
\email{domitrz@mini.pw.edu.pl}

\author[P. de M. Rios]{Pedro de M. Rios}
\address{Departamento de Matem\'atica - ICMC \br
Universidade de S\~ao Paulo \br
S\~ao Carlos\br
Brazil}
\email{prios@icmc.usp.br}


\subjclass{ 53A15, 53D12}

\keywords{Parabolic affine spheres, Monge-Amp\`ere equation, Special K\"ahler manifolds, Lagrangian submanifolds, Center-chord transform, Exterior Differential Systems, Lagrangian and Legendrian singularities.}

\date{november 15, 2013}

\begin{abstract}
There are exactly two different types of bi-dimensional improper affine spheres: the non-convex ones can be modeled by the center-chord transform of a pair of planar curves while the
convex ones can be modeled by a holomorphic map. In this paper, we show that both
constructions
can be generalized  to arbitrary even dimensions: the former class corresponds to the center-chord
transform of a pair of Lagrangian submanifolds while the latter is related to special K\"ahler manifolds.
Furthermore, we show that the improper affine spheres obtained in this way are solutions of certain exterior differential systems.
Finally, we also discuss the problem of realization of simple stable Legendrian singularities as singularities of these improper affine spheres.
\end{abstract}

\maketitle

\section{Introduction}

A hypersurface whose Blaschke normal vectors are pointing to a center is called an affine sphere. This class of manifolds is quite large and has been studied by various researchers (\cite{Loftin}).
Hypersurfaces with vanishing affine mean curvature are called affine maximal surfaces and have also been extensively studied (\cite{Aledo09}).
Parabolic, or Improper Affine Spheres (IAS) are affine spheres that are also affine maximal. This is equivalent to saying that the Blaschke normal vectors are parallel, i.e.,
the center of the affine ``sphere'' is at infinity.
There are many articles studying  two dimensional IAS (\cite{Galvez07},\cite{Craizer11},\cite{Craizer12},\cite{Ishikawa06A}, \cite{Martinez05},\cite{Milan13},\cite{Milan14})
and there is also some work in dimension three (\cite{Ishikawa06B}).
In this paper we shall consider IAS in arbitrary even dimensions.

Remind  that for an immersion $\phi: U\subset \R^N\to \R^{N+1}$, if $\tilde{\nabla}$ denotes the canonical affine connection on
$\R^{N+1}$, then any transversal vector field $\xi$ to $\phi(U)$ defines a connection $\nabla$ and symmetric bilinear form $h$ on $TU$ by
$$
\tilde{\nabla}_{X}\phi_*{Y}=\phi_*(\nabla_{X}{Y})+h(X,Y)\xi,
$$
where $X,Y$ are smooth vector fields on $U$. The symmetric bilinear form $h$ defines a volume element on U, denoted $\nu_h$. On the other hand, $\phi^*\Theta_{\xi}$, where
$\Theta_{\xi}(\cdot)=\det(\cdot,\xi)$, defines another volume element on $U$.
Then, a well-known theorem of Blaschke (\cite{Blaschke23},\cite{Nomizu94}) asserts that there exists a unique, up to sign, transversal vector field $\xi$ such that $\nu_h=\phi^*\Theta_{\xi}$ and furthermore $\nabla(\phi^*\Theta_{\xi})=0$. This  unique $\xi$ is called the affine normal, or Blaschke normal vector field to the hypersurface $\phi(U)\subset\R^{N+1}$.

Let $\xi=(0_N,1)\in\R^N\times\R$ be a parallel vector field for the canonical connection $\tilde{\nabla}$ on $\R^{N+1}$. It is well-known (straightforward computation) that the graph of a function $F:V\subset\R^N\to\R$ is an improper affine sphere with affine normal $\xi$ if and only if  $F$ satisfies the classical Monge-Amp\`ere equation
\begin{equation}\label{eq:MongeAmpere}
\det\left(\frac{\partial^2F}{\partial x^2}\right)=c,
\end{equation}
for some constant $c$, where the l.h.s. of (\ref{eq:MongeAmpere}) denotes the Hessian of $F$.
The class of Monge-Amp\`ere equations, in particular the classical one, is an important topic of study in partial differential equations and this highlights the importance of improper affine spheres in geometric analysis (see, e.g., the recent expositions and surveys \cite{LJSX},\cite{Loftin}).

Now, for a smooth function $F:V\to\R$, where $V$ is an open subset of $\mathbb R^{2n}$, we can translate the Monge-Amp\`ere equation \eqref{eq:MongeAmpere} in symplectic terms, as follows. Denote the canonical symplectic form in $\R^{2n}$ by
\begin{equation}\label{eq:omega}
\omega=\sum_{i=1}^n dx_i\wedge dx_{i+n}
\end{equation}
and let $Y_F$ be the
Hamiltonian vector field of $F$, i.e.,
\begin{equation}\label{eq:sympgradient}
dF=\omega(\cdot ,Y_F).
\end{equation}
Then $F$ satisfies the classical Monge-Amp\`ere equation (\ref{eq:MongeAmpere}) if and only if there is a constant $c$ such that
\begin{equation}\label{eq:symplecticMA}
\det(DY_F)=c,
\end{equation}
where $DY_F$ denotes the jacobian matrix of the map $x\mapsto Y_F(x)$.

For an open set  $U\subset\R^{2n}$, consider an immersion $\phi:U\to\R^{2n+1}$ transversal to $\xi=(0,1)\in\R^{2n}\times\R$, where the latter $\R^{2n}$ carries the symplectic form $\omega$. We can write
$\phi(r)=(x(r), f(r))\in \R^{2n}\times\R$, where $x(r)\in V\subset\R^{2n}$ is locally invertible and $f(r)=F(x(r))$, for some $F:V\to\R$.
Denote by $Y_F(x)$ the Hamiltonian vector field of $F$ defined by equation \eqref{eq:sympgradient} and let $y(r)=Y_F(x(r))$. Define
$A(r):T_rU\to T_rU$ by
\begin{equation}\label{eq:definea}
Dy(r)=Dx(r)\cdot A(r).
\end{equation}
It follows from (\ref{eq:symplecticMA}) that $\phi$ is an IAS with Blaschke normal $(0,1)$
if and only if $\det A=c$, for some constant $c$.

In dimension two, any non-convex IAS can be parameterized by asymptotic coordinates and modeled by the center-chord transform of a pair of planar curves (\cite{Craizer08},\cite{Craizer11},\cite{Milan13}).
In this paper, we show that this construction can be generalized to arbitrary even dimensions, where we consider
$x$ as the center and $y$ as the mid-chord of a pair of real Lagrangian submanifolds.
In this case, the asymptotic coordinates condition is replaced by the equation $Dy(r)=Dx(r)\cdot  K_{2n}$, with
\begin{equation}\label{defk}
K_{2n}=\left[
\begin{array}{cc}
-I_n & 0\\
0& I_n
\end{array}
\right],
\end{equation}
where $I_n$ denotes the $n\times n$ identity matrix.

Any convex bi-dimensional IAS can be parameterized by isothermal coordinates and modeled by a holomorphic map (\cite{Galvez07},\cite{Craizer12},\cite{Martinez05}).
This construction can be generalized to arbitrary even dimensions starting from a holomorphic map $G:\C^n\to\C$ and the isothermal condition replaced by
 the relation $Dy(r)=Dx(r)\cdot  J_{2n}$, with
\begin{equation}\label{defj}
J_{2n}=\left[
\begin{array}{cc}
0 & I_n\\
-I_n& 0
\end{array}
\right].
\end{equation}
IAS of this type have already been considered in connection with special K\"ahler manifolds in \cite{Cortes00}, where they were called {\it special}.
We shall see that special IAS are naturally related to a rotated center-chord transform of a pair of complex conjugate Lagrangian submanifolds.

Improper affine spheres can also be seen as geometric solutions of a  Monge-Amp\`ere system (\cite{Ishikawa06A}). Consider a contact form $\theta$ in $\R^{4n+1}$ given by
\begin{equation}\label{eq:theta}
\theta=dz-\sum_{i=1}^n y_{i+n}dx_i-y_idx_{i+n}
\end{equation}
and let
\begin{equation}\label{eq:Omega}
\Omega=\sum_{i=1}^n dx_i\wedge dy_{i+n}+dy_i\wedge dx_{i+n}.
\end{equation}
be the associated canonical symplectic form in $\R^{4n}$.
For any $F:V\to\R$,
it follows  from (\ref{eq:sympgradient}) that the image of a map $L:V\to\mathbb R^{4n}, \ x\mapsto L(x)=(x,Y_F(x))$, is a Lagrangian submanifold of the symplectic space $(\mathbb R^{4n},\Omega)$, i.e. $L^*\Omega=0$, while the image of a map
${\tilde L}:V\to\mathbb R^{4n+1}, \ x\mapsto {\tilde L}(x)=(x,Y_F(x),F(x))$, is a Legendrian submanifold of the contact space $(\mathbb R^{4n+1},\{\theta=0\})$, i.e. ${\tilde L}^*\theta=0$.
Then, consider the $2n$-form  $\eta$ in $\R^{4n}$ given by
$$
\eta=c\ dx_1\wedge ....\wedge dx_{2n} - dy_1\wedge ....\wedge dy_{2n}.
$$
A solution of the Monge-Amp\`ere system $\{\theta,\eta\}$ is a map $F:V\to\R$ such that ${\tilde L}^*\theta=0$ and $L^*\eta=0$. Thus $F:V\to\R$ is a solution
of this Monge-Amp\`ere system if and only if the graph of $F$ is an IAS.

Differently from the case $n=1$, there are other IAS of dimension $2n$ that are neither center-chord nor special, as we show in some examples. On the other hand, as a main result of this paper, we shall prove that
center-chord and special IAS can be characterized as solutions of certain exterior differential systems (EDS).
Define a symplectic form in $\R^{4n}$ by
\begin{equation}\label{eq:Omega1}
\Omega_1=\sum_{i=1}^n dx_i\wedge dx_{i+n}+dy_i\wedge dy_{i+n}
\end{equation}
and consider the EDS $\mathcal{E}_1=\{\Omega,\Omega_1\}$. We shall verify that the center-chord IAS are exactly
the solutions of the EDS $\mathcal{E}_1$.
Similarly, let
\begin{equation}\label{eq:Omega2}
\Omega_2=\sum_{i=1}^n dx_i\wedge dx_{i+n}-dy_i\wedge dy_{i+n}.
\end{equation}
and define the EDS
$\mathcal{E}_2=\{\Omega,\Omega_2\}$. We shall prove that special IAS are the solutions of $\mathcal{E}_2$.
A natural question  that is left out from this paper is whether there are other classes of IAS that are solutions of some EDS.

Therefore, center-chord and special IAS provide two general classes of solutions to the classical Monge-Amp\`ere equation in any even number of variables.
But  general solutions are known to present singularities. In fact, except for paraboloids, any  convex IAS admits singularities, thus singularities appear naturally in the context of improper affine spheres.

Denote by $\pi_1:\R^{2n}\times\R^{2n}\to\R^{2n}$ the projection $\pi_1(x,y)=x$ and by ${\mathcal L}$ the image of the Lagrangian immersion $L$ described above. In the context of IAS, singularities of the Lagrangian map $\pi_1:{\mathcal L}\to\R^{2n}$ are the ones which were called admissible in \cite{Martinez05} and, in that paper, IAS with admissible
singularities were called Improper Affine Maps.  In dimension $2$, admissible singularities have been well studied (\cite{Galvez07},\cite{Craizer11},\cite{Craizer12},\cite{Martinez05},\cite{Milan13},\cite{Milan14}).

One can also consider singularities of the Legendrian map $\pi_2:{\tilde{\mathcal L}}\to\R^{2n+1}$, where
$\pi_2:\R^{2n}\times\R^{2n}\times\R\to\R^{2n}\times\R$ is the projection $\pi_2(x,y,z)=(x,z)$ and
${\tilde {\mathcal L}}$ is the image of the above Legendrian immersion ${\tilde L}$.
For $2$-dimension IAS, these singularities were studied in \cite{Ishikawa06A}.

We shall study in this paper the stable singularities of the above Lagrangian and Legendrian maps for general even dimensions. From  Theorem 4.1 in  \cite{Domitrz13}, we know that any simple stable Lagrangian singularity is realizable as a center-chord IAS.
Here we prove that this also holds for special IAS,
and our proof extends naturally to the Legendrian setting, showing that any simple  stable Lagrangian and Legendrian singularity is realizable as a special IAS.
 Starting from \cite{Domitrz13}, one can easily verify that the Legendrian statement also holds  for center-chord IAS, but here we prove this explicitly in a way that highlights the similarities between the center-chord and the special IAS. In the center-chord case, we also comment on the boundary singularities, or ``on-shell'' singularities, that appear in the limit of vanishing chords and have a special symmetry, as described in \cite{DMR}.

This paper is organized as follows: In section 2 we establish the notation and describe the symplectic condition for an immersion to be an IAS.
In section 3 we describe the models for center-chord and special IAS. In section 4 we prove that the center-chord and special IAS are the solutions of the EDS $\mathcal{E}_1$ and $\mathcal{E}_2$, respectively.
In section 5 we discuss the Lagrangian and Legendrian singularities of these maps.

{\it Acknowledgements}: The third author benefitted from the hospitality of the Mathematics Department at UC Berkeley during the final stages of the preparation of this manuscript. Special thanks to his host, Alan Weinstein, for comments and suggestions that improved this final version.

\section{Symplectic characterization of IAS}

\subsection{The symplectic structructure of $TV$ and contact structure of $TV\times \R$}

Let $V$ be an open subset of $\mathbb R^{2n}$ and let $\omega$ be the canonical symplectic form on $V$.
A map $\flat: TV\ni v\mapsto \omega(v,\cdot) \in T^{\ast}V$ is a isomorphism of the bundles $TV, \ T^{\ast}V$.
Let $\alpha$ be the canonical Liouville $1$-form on $T^{\ast}V$.
Then  $\Omega=\flat^{\ast}d\alpha$ is a symplectic form on $TV$ and $\theta=dz+\flat^{\ast}\alpha$ is a contact form on $TV\times \mathbb R$, where $z$ is a coordinate on $\mathbb R$.

Let $F:V\rightarrow \mathbb R$ be a smooth function. Let $Y_F$ be the Hamiltonian vector field of $F$ e. i. $\omega(Y_F, \cdot) = -dF(\cdot)$.

\begin{proposition}
A map $\tilde{L}:V\ni x\mapsto (x,Y_F(x),F(x))\in TV\times \mathbb R$ is a Legendrian immersion to the contact space $(TV\times \mathbb R, \{ \theta=0 \})$.
\end{proposition}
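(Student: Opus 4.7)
The plan is to verify directly that $\tilde L$ satisfies the three requirements to be a Legendrian immersion: (i) it is an immersion, (ii) $\tilde L^{\ast}\theta =0$, and (iii) $\dim V$ equals the Legendrian dimension of the contact manifold. Requirement (iii) is immediate: $\dim V =2n = \tfrac{1}{2}(\dim(TV\times \mathbb R)-1)$. Requirement (i) follows from the observation that the composition of $\tilde L$ with the obvious projection $TV\times \mathbb R \to V$ is the identity on $V$, so $d\tilde L$ is injective at every point.

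The substantive step is (ii). My approach is to split $\theta = dz + \flat^{\ast}\alpha$ and pull back each summand separately along $\tilde L$. Writing $\tilde L = (L,F)$ with $L:V\to TV$, $L(x)=(x,Y_F(x))$, the first summand contributes $\tilde L^{\ast}dz = dF$. For the second summand one observes that $\flat\circ L:V\to T^{\ast}V$ is the section $x\mapsto (x,\flat(Y_F(x)))=(x,\omega(Y_F(x),\cdot))$, and by the defining equation $\omega(Y_F,\cdot)=-dF$ of the Hamiltonian vector field, this section is precisely the one associated with the $1$-form $-dF$ on $V$. Hence $L^{\ast}\flat^{\ast}\alpha=(\flat\circ L)^{\ast}\alpha=(-dF)^{\ast}\alpha$.

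The key identity I need is the tautological property of the Liouville form: for any $1$-form $\sigma$ on $V$, viewed as a section $\sigma:V\to T^{\ast}V$, one has $\sigma^{\ast}\alpha = \sigma$. Applying this with $\sigma=-dF$ gives $L^{\ast}\flat^{\ast}\alpha =-dF$, and therefore
\[
\tilde L^{\ast}\theta = dF + L^{\ast}\flat^{\ast}\alpha = dF - dF = 0,
\]
completing the verification. As a sanity check one can run the same computation in coordinates using the explicit formula $\theta = dz - \sum_{i=1}^n (y_{i+n}dx_i + y_i dx_{i+n})$ from \eqref{eq:theta}: on the image of $\tilde L$ the variable $y$ is replaced by $Y_F(x)$, whose components express the partial derivatives of $F$ up to the standard symplectic sign, and the result $\tilde L^{\ast}\theta=0$ reduces to the identity $dF = \sum_i \partial_i F\, dx_i$.

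I do not anticipate a genuine obstacle; the only point requiring care is the bookkeeping of the two sign conventions — one in the Hamiltonian equation $\omega(Y_F,\cdot)=-dF$ and the other in the definition $\theta=dz+\flat^{\ast}\alpha$ — which must cancel to yield $\tilde L^{\ast}\theta=0$. The proof is essentially a coordinate-free rephrasing of the classical fact that the $1$-jet lift of $F$ is Legendrian for the standard contact structure on $J^1(V,\mathbb R)$, with the identification $J^1(V,\mathbb R)\simeq T^{\ast}V\times\mathbb R \simeq TV\times \mathbb R$ provided by $\flat$.
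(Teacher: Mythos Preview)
Your proof is correct and follows essentially the same approach as the paper: both split $\theta=dz+\flat^{\ast}\alpha$, identify $\flat\circ L$ with the section $-dF$, and invoke the tautological property of the Liouville form to conclude $\tilde L^{\ast}\theta=dF-dF=0$. You are slightly more careful in distinguishing $L$ from $\tilde L$ when composing with $\flat$, and you add the dimension check and coordinate sanity check, but the substance is identical.
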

\begin{proof}
It is obvious that $\tilde{L}$ is a immersion. We have
$$\tilde{L}^{\ast}\theta=dF+\tilde{L}^{\ast}\flat^{\ast}\alpha=dF+(\flat\circ\tilde{L})^{\ast}\alpha).$$
On the other hand $\flat\circ\tilde{L}=\flat(Y_F)=\omega(Y_F,\cdot)=-dF$. By the tautological property of the Louville $1$-form $\alpha$ we have $(\beta)^{\ast}\alpha=\beta$ for any $1$-form $\beta$ on $V$. Thus we get $\tilde{L}^{\ast}\theta=dF+(-dF)^{\ast}\alpha=dF-dF=0$.
\end{proof}

Using the same arguments one can prove the following proposition.
\begin{proposition}\label{LagHam}
A map $L:V\ni x\mapsto (x,Y_F(x))\in TV$ is a Lagrangian immersion to the symplectic space $(TV, \Omega)$.
\end{proposition}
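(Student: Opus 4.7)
The plan is to mimic the proof of the preceding Legendrian proposition almost verbatim, exploiting the tautological property of the Liouville form together with $d^2=0$. First, I would note that $L$ is an immersion for free, since the first component of $L(x)=(x,Y_F(x))$ is the identity on $V$, so $dL$ is injective at every point.

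Next, the key computation is to pull back $\Omega=\flat^\ast d\alpha$ along $L$. By functoriality,
\begin{equation*}
L^\ast\Omega=L^\ast\flat^\ast d\alpha=(\flat\circ L)^\ast d\alpha.
\end{equation*}
As in the Legendrian case, the composite $\flat\circ L:V\to T^\ast V$ sends $x$ to $\flat(Y_F(x))=\omega(Y_F(x),\cdot)=-dF(x)$, so $\flat\circ L$ is precisely the section $-dF$ of $T^\ast V$.

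Now I would invoke the tautological property $\beta^\ast\alpha=\beta$ for any $1$-form $\beta$ on $V$, which the authors already used in the previous proof. Applying it to $\beta=-dF$ and commuting pullback with the exterior derivative yields
\begin{equation*}
(\flat\circ L)^\ast d\alpha=(-dF)^\ast d\alpha=d\bigl((-dF)^\ast\alpha\bigr)=d(-dF)=0,
\end{equation*}
so $L^\ast\Omega=0$ and $L$ is Lagrangian.

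There is essentially no obstacle here: once the tautological identity $\beta^\ast\alpha=\beta$ and the commutativity of $d$ with pullback are granted, the argument is a one-line calculation. If anything, the only thing to be slightly careful about is the sign convention $\omega(Y_F,\cdot)=-dF$ used in this section (as opposed to $dF=\omega(\cdot,Y_F)$ in the introduction), but the two are consistent and yield the same conclusion.
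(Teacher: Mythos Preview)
Your proof is correct and is exactly the approach the paper intends: the authors simply write ``Using the same arguments one can prove the following proposition,'' referring to the Legendrian computation you mimicked. There is nothing to add.
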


Let $x=(x_1,\cdots,x_{2n})$ be a coordinate system on $V$ and
\begin{equation}\label{omegacan}
\omega=\sum_{i=1}^{n}dx_i\wedge dx_{i+n}.
\end{equation}
Let $(x,y)=(x_1,\cdots,x_{2n},y_1,\cdots,y_{2n})$ be the standard coordinate system on $TV$, $(x,y,z)$ be a coordinate system on $TV\times \mathbb R$ and finally let $(x,p)=(x_1,\cdots,x_{2n},p_1,\cdots,p_{2n})$ be the standard coordinate system on $T^{\ast}V$.

The Liouville $1$-form in these coordinates is $\alpha=\sum_{i=1}^n p_i dx_i$ and the isomorphism is
$$\flat(x,y)=(x_1,\cdots,x_{2n},-y_{n+1},\cdots,-y_{2n},y_1,\cdots,y_n).$$
Thus the symplectic form and the contact form have the following forms
\begin{equation}\label{Omega}
\Omega=\sum_{i=1}^n dx_i\wedge dy_{i+n} +dy_i\wedge dx_{i+n},
\end{equation}
\begin{equation}\label{theta}
\theta=dz-\sum_{i=1}^ny_{i+n}dx_i-y_idx_{i+n}.
\end{equation}
\subsection{Center-chord transforms}

The form $\Omega$ is also called the tangential lift of $\omega$ and, under the identification  $y=\dot x$, it can be formally identified with the ``time derivative of $\omega$'' and is often  denoted by $\dot\omega$. By Proposition \ref{LagHam}, the ``graph'' of a Hamiltonian function $F$, i.e. its vector field $Y_F$, is Lagrangian w.r.t. $\Omega$. Such a Hamiltonian vector field is usually seen as the generator of a canonical transformation on $(V,\omega)$.

However, the form $2\Omega$ can also be seen as the pullback of the ``difference'' symplectic form  $\omega\ominus\omega = (\omega,-\omega)$ on $V\times V$ via the linear diffeomorphism
\begin{equation}\label{psi}\Psi:TV\to V\times V , \  (x,y)\mapsto(x+y,x-y)=(x_+,x_-).\end{equation} In this context, the coordinates $(x,y)$ are called the center and mid-chord coordinates and $\Psi^{-1}$ is the {\it center-chord transform}. This is globally well-defined when $V=\mathbb R^{2n}$ and thus, in such a case,
\begin{equation}\label{invpsi} \Psi^{-1}:V\times V\to TV , \ (x_+,x_-)\mapsto\left(\frac{x_++x_-}{2},\frac{x_+-x_-}{2}\right)=(x,y),\end{equation}
so that
\begin{equation}\label{pullbackOmega} (\Psi^{-1})^*\Omega=\frac{1}{2}(\omega_+-\omega_-), \end{equation}
where $\omega_+$ and $\omega_-$ are given as in (\ref{omegacan}), for coordinates $x_+=(x^+_i, ..., x^+_{2n})$, $x_-=(x^-_1, ..., x^-_{2n})$ in $V\times V$.

Then, a pair of  real Lagrangian submanifolds, $\Lambda_1,\Lambda_2$, of $(V , \omega)$ pulls back to a real Lagrangian submanifold $L=\Psi^{-1}(\Lambda_1\times\Lambda_2)$ of  $(TV,\Omega)$ which, when projecting regularly to the center subspace $V\ni x$  can be described as the ``graph'' of a function $F$ by $L(x)=(x,Y_F(x))$, as above  (here the center subspace $V\simeq T^0V$ is seen as  the zero section of $TV$).

In this setting, the function $F$ is the Poincar\'e, or center generating function of the canonical relation $\Lambda_1\times\Lambda_2\subset V\times V$ (\cite{Poin}\cite{Wein}\cite{RO}).  Note that this differs from the usual setting when $Y_F$ is  the Hamiltonian vector field that generates an infinitesimal canonical transformation $\Phi:V\to V$ because, in the latter case, although the graph of $\Phi$ is also a real Lagrangian submanifold of $V\times V$, it projects regularly to both copies of $V$.

The above  center-chord description can be generalized to study  complex Lagrangian submanifolds  of a complexified (real) symplectic vector space $(V^{\C},\omega)$. In this case, one fixes a complex structure and assigns a pair of holomorphic and anti-holomorphic coordinates, $x=(u,\bar u)\in V^{\C}$, so that the symplectic form is given in these complex canonical coordinates by
\begin{equation}\label{complexomega} \omega=\frac{i}{2}du\wedge d\bar u \ ,\end{equation}
with index summation subtended,
and thus $\omega$ is still a real form,  $\bar\omega=\omega$.

The map $\Psi:TV^{\C}\to V^{\C}\times V^{\C}$, given by (\ref{psi}), assigns complex canonical coordinates  in each copy of $V^{\C}$ which are induced from complex canonical coordinates $x=(u,\bar u), y=(w,\bar w)$ in $TV^{\C}$, and vice versa, by
\begin{equation}\label{complexpm} x_+=(z_+,\bar z_+) =(u+w,\bar u+\bar w) \ , \ x_-=(z_-,\bar z_-) =(u-w,\bar u-\bar w) \ , \end{equation}
and thus the relevant {\it real} symplectic forms in  $TV^{\C}$ and  $V^{\C}\times V^{\C}$ are written in these complex canonical coordinates as
\begin{eqnarray} & \Psi^*(\omega_+-\omega_-)=2\Omega={i}(dw\wedge d\bar u + du\wedge d\bar w) \ , \\
& (\Psi^{-1})^*(2\Omega)=\omega_+-\omega_-=\frac{i}{2}(dz_+\wedge d\bar z_+ - dz_-\wedge d\bar z_-) \ . \end{eqnarray}

However, for various reasons, some to be made clearer further below, it is also useful to define the {\it rotated center-chord transform} as
\begin{equation}\label{complexinvpsi}
\widetilde{\Psi}^{-1}:V^{\C}\times V^{\C}\to TV^{\C}, \ (\tilde x_+,\tilde x_-)\mapsto\left(\frac{\tilde x_++\tilde x_-}{2},\frac{\tilde x_+-\tilde x_-}{2i}\right)=(x,y),
\end{equation}
with inverse
\begin{equation}\label{complexpsi}
\widetilde{\Psi}: TV^{\C}\to V^{\C}\times V^{\C}, \ (x, y)\mapsto (x+i y, x-i y)=(\tilde x_+,\tilde x_-).
\end{equation}
Note that the new map $\widetilde{\Psi}$ is obtained from the old one by first multiplying each fiber of $TV^{\C}$ by $i$, that is:
\begin{equation}\label{multii} J_x : T_xV^{\C}\to T_xV^{\C} \ , \ y\mapsto iy \ , \end{equation}
but this is equivalent to  performing a $\pi/2$ rotation on each fiber of $TV^{\C}$, so that $J_x$ can also be defined using only  real coordinates, that is:
\begin{equation}\label{multiir} J_x : T_xV\to T_xV \ , \  J_x^2=-Id_x \ . \end{equation}
Now, if $ J$ denotes the map $TV^{\C}\to TV^{\C}$ (or $TV\to TV$) which is defined by the collection of  fiber maps $J_x$ as above, $\forall x\in V$,  then
\begin{equation}\label{compJ} \widetilde{\Psi}=\Psi\circ J \ , \end{equation}
so that $\widetilde\Psi$ and its inverse, the rotated center-chord transform $\widetilde\Psi^{-1}$, can also be defined as real maps $TV\to V\times V$ and $V\times V\to TV$, respectively.

\subsection{Immersions that are transversal to a constant direction}

In this section we recall some basic facts concerning dual connections.
Let $U\subset\R^{2n}$ be an open simply connected set and let $V$ be an open set of the symplectic affine space $\R^{2n}$ with its canonical symplectic form $\omega$.

Consider an immersion $\phi:U\to V\times\R\subset\R^{2n}\times\R$ transversal to $\xi=(0,1)$. For $r\in U$, write
\begin{equation}\label{eq:graphconnection}
\tilde\nabla_{X}\phi_*{Y}=\phi_*(\nabla_{X}{Y})+h(X,Y)\xi,
\end{equation}
for any smooth vector fields $X,Y$ on $U$, where $\tilde\nabla$ denotes the canonical connection in $\R^{N+1}$.

\begin{lemma}
$\nabla$ is a torsion free affine connection and $h$ is a symmetric bilinear form.
\end{lemma}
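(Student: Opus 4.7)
The plan is to exploit the uniqueness of the splitting in \eqref{eq:graphconnection}. Since $\xi$ is transversal to $\phi_{\ast}(T_rU)$ at every $r\in U$, any vector in $T_{\phi(r)}\mathbb R^{2n+1}$ decomposes uniquely as $\phi_{\ast}(Z)+c\xi$ for some $Z\in T_rU$ and $c\in\mathbb R$. Consequently, reading off the tangential and normal parts of a given expression defines both $\nabla_XY$ and $h(X,Y)$ unambiguously, and all tensorial identities for these two objects can be derived by plugging known identities for $\tilde\nabla$ into \eqref{eq:graphconnection} and equating the resulting tangential and normal components on each side.

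First, I would check that $\nabla$ is an affine connection and that $h$ is $C^{\infty}(U)$-bilinear. The $\mathbb R$-linearity in both arguments is immediate from $\tilde\nabla$ being $\mathbb R$-linear. For $C^{\infty}$-linearity in $X$, I would write
\[
\phi_{\ast}(\nabla_{fX}Y)+h(fX,Y)\xi=\tilde\nabla_{fX}\phi_{\ast}Y=f\tilde\nabla_X\phi_{\ast}Y=\phi_{\ast}(f\nabla_XY)+fh(X,Y)\xi,
\]
and use transversality to conclude $\nabla_{fX}Y=f\nabla_XY$ and $h(fX,Y)=fh(X,Y)$. For the Leibniz rule in $Y$, I would expand
\[
\tilde\nabla_X\phi_{\ast}(fY)=\tilde\nabla_X(f\phi_{\ast}Y)=X(f)\phi_{\ast}Y+f\tilde\nabla_X\phi_{\ast}Y
\]
and again split off the tangent and transverse parts, producing $\nabla_X(fY)=X(f)Y+f\nabla_XY$ and, as a byproduct, $h(X,fY)=fh(X,Y)$, which together with $\mathbb R$-linearity gives the $C^{\infty}$-bilinearity of $h$.

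Next, for symmetry of $h$ and torsion-freeness of $\nabla$, I swap $X$ and $Y$ in \eqref{eq:graphconnection} and subtract, then use that the canonical connection $\tilde\nabla$ on $\mathbb R^{2n+1}$ is torsion free together with the naturality of the Lie bracket under $\phi_{\ast}$:
\[
\tilde\nabla_X\phi_{\ast}Y-\tilde\nabla_Y\phi_{\ast}X=[\phi_{\ast}X,\phi_{\ast}Y]=\phi_{\ast}[X,Y].
\]
Substituting into the subtracted version of \eqref{eq:graphconnection} gives
\[
\phi_{\ast}\bigl(\nabla_XY-\nabla_YX-[X,Y]\bigr)+\bigl(h(X,Y)-h(Y,X)\bigr)\xi=0,
\]
and transversality of $\xi$ forces each summand to vanish separately, yielding both the symmetry $h(X,Y)=h(Y,X)$ and $\nabla_XY-\nabla_YX=[X,Y]$.

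The argument is largely mechanical; the only point that requires a moment of care is to observe that the tangential/normal decomposition is genuinely unique because of transversality, and that the Lie-bracket identity $[\phi_{\ast}X,\phi_{\ast}Y]=\phi_{\ast}[X,Y]$ is to be read after locally extending $\phi_{\ast}X,\phi_{\ast}Y$ to vector fields on a neighborhood of $\phi(U)$ in $\mathbb R^{2n+1}$, which is permissible since $\phi$ is an immersion. No deeper obstacle arises.
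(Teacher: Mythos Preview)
Your proof is correct and follows the same core argument as the paper: swap $X$ and $Y$ in \eqref{eq:graphconnection}, subtract, use torsion-freeness of $\tilde\nabla$, and read off the tangential and transversal components. The paper's proof records only this step, taking the verification that $\nabla$ is a connection and $h$ is $C^\infty$-bilinear as standard; your additional checks of function-linearity and the Leibniz rule are correct but go beyond what the paper actually writes.
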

\begin{proof}
Interchanging the roles of $X$ and $Y$ in \eqref{eq:graphconnection} and subtracting we obtain
$$
\nabla_{X}{Y}-\nabla_{Y}X-[X,Y]=0
$$
and
$$
h(X,Y)-h(Y,X)=0.
$$
\end{proof}

Denote $\phi(r)=(x(r), f(r))$ with $f(r)=F(x(r))$. Denote by $Y_F$ denote the Hamiltonian vector field of $F:V\subset\mathbb R^{2n}\to \mathbb R$ and let
$y(r)=Y_F(x(r))$. We have that
\begin{equation}\label{defin1}
df(r)\cdot u=\omega(Dx(r)u,y(r)), \ \ \  \forall u\in T_rU,
\end{equation}
where the dot $\cdot$ in the l.h.s. of  (\ref{defin1}) denotes the usual vector-form contraction, seen also as the matrix product of a line
$1\times(2n)$ and a column $(2n)\times 1$ matrix, as we shall be using the dot $\cdot$ to denote matrix product in various places below.
Fix a basis $\left\{e_i\right\}_{1\leq i\leq 2n}$ of $T_rU$ and write
$$
x_{r_i}=Dx(r)\cdot e_i;\ \ \ x_{r_ir_j}= Dx_{r_i}\cdot e_j.
$$

\begin{lemma}
 We have that
\begin{equation}\label{eq:hsym}
h\left(e_i,e_j \right)=\omega(x_{r_i},y_{r_j})=\omega(x_{r_j},y_{r_i})
\end{equation}
and the $\nabla$-Christoffel symbols $\Gamma_{ij}^k$ are given by the following formula
\begin{equation}\label{eq:Christoffel1}
x_{r_ir_j}=\sum_{k}\Gamma_{ij}^k x_{r_k}.
\end{equation}
\end{lemma}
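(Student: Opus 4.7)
The plan is to exploit the fact that the ambient connection $\tilde\nabla$ is canonical (so that covariant derivatives are just component-wise partial derivatives) and compare components of the Gauss-like decomposition \eqref{eq:graphconnection} in the basis $\{e_i\}$. First, I would write $\phi_* e_j=(x_{r_j},f_{r_j})$ and compute $\tilde\nabla_{e_i}\phi_* e_j=(x_{r_ir_j},f_{r_ir_j})$. Writing $\nabla_{e_i}e_j=\sum_k\Gamma_{ij}^k e_k$ and using $\xi=(0,1)$, equation \eqref{eq:graphconnection} splits into the two component equations
\begin{equation*}
x_{r_ir_j}=\sum_k\Gamma_{ij}^k x_{r_k}, \qquad f_{r_ir_j}=\sum_k\Gamma_{ij}^k f_{r_k}+h(e_i,e_j).
\end{equation*}
The first of these is exactly \eqref{eq:Christoffel1}. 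Note that this decomposition is well defined because transversality of $\phi$ to $\xi=(0,1)$ forces $Dx(r)$ to have rank $2n$, so the vectors $x_{r_k}$ form a basis of $\mathbb{R}^{2n}$ and the $\Gamma_{ij}^k$ are uniquely determined.

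For the formula for $h$, I would use the defining relation \eqref{defin1}, which gives $f_{r_j}=\omega(x_{r_j},y)$. Differentiating with respect to $r_i$ and applying the Leibniz rule (since $\omega$ is bilinear and constant as a form on $\mathbb R^{2n}$) yields
\begin{equation*}
f_{r_ir_j}=\omega(x_{r_ir_j},y)+\omega(x_{r_j},y_{r_i}).
\end{equation*}
Substituting the already-established expression $x_{r_ir_j}=\sum_k\Gamma_{ij}^k x_{r_k}$ in the first term and using $\omega(x_{r_k},y)=f_{r_k}$ once more, one obtains $f_{r_ir_j}=\sum_k\Gamma_{ij}^k f_{r_k}+\omega(x_{r_j},y_{r_i})$. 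Comparing with the second component equation above forces $h(e_i,e_j)=\omega(x_{r_j},y_{r_i})$.

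Finally, the equality $\omega(x_{r_j},y_{r_i})=\omega(x_{r_i},y_{r_j})$ is exactly the symmetry of $h$, which was already proved in the preceding lemma; this gives both identities in \eqref{eq:hsym}. Alternatively, one can verify this symmetry directly: since $y=Y_F\circ x$ and $Y_F$ is the Hamiltonian vector field of $F$, the map $x\mapsto Y_F(x)$ has symmetric differential with respect to $\omega$, i.e. $\omega(u,DY_F\cdot v)=\omega(v,DY_F\cdot u)$; applied to $u=x_{r_i}$, $v=x_{r_j}$ and using $y_{r_i}=DY_F\cdot x_{r_i}$, this gives the required symmetry.

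There is no real obstacle here: the argument is a direct computation once one reads off the two component equations from \eqref{eq:graphconnection}. The only subtlety is keeping a clean bookkeeping of indices and remembering that the Hessian computation of $f_{r_ir_j}$ produces an extra $\omega(x_{r_ir_j},y)$ term that must be matched against the $\Gamma_{ij}^k$ contribution in order to isolate $h$.
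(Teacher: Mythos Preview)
Your proposal is correct and follows essentially the same approach as the paper: both compute $\phi_{r_ir_j}$ componentwise, identify the tangential part $(x_{r_ir_j},\omega(x_{r_ir_j},y))$ with $\phi_*(\nabla_{e_i}e_j)$ to read off \eqref{eq:Christoffel1}, and identify the normal part $(0,\omega(x_{r_i},y_{r_j}))$ with $h(e_i,e_j)\xi$. Your version adds the explicit remark that transversality makes $Dx(r)$ invertible (so the $\Gamma_{ij}^k$ are well defined) and an alternative direct check of the symmetry via $DY_F$, neither of which appears in the paper but both of which are welcome clarifications.
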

\begin{proof}
Since
\begin{equation*}
\phi_{r_i}=\left(x_{r_i},\omega(x_{r_i},y)\right).
\end{equation*}
we obtain
\begin{equation}
\phi_{r_ir_j}=\left(x_{r_ir_j},\omega(x_{r_ir_j},y)\right)+\left(0,\omega(x_{r_i},y_{r_j})\right).
\end{equation}
Now observe that the first parcel in the second member is tangent while the second parcel is a multiple of $\xi$. On the other hand we have
$$
\left(x_{r_ir_j},\omega(x_{r_ir_j},y)\right)=\phi_{\ast}(\nabla_{e_j}e_i)=\sum_{k}\Gamma_{ij}^k\phi_{r_k}=\left(\sum_{k}\Gamma_{ij}^k x_{r_k}, \omega(\sum_{k}\Gamma_{ij}^k x_{r_k},y)\right)
$$
Thus the lemma is proved.
\end{proof}

Define $g:U\to\R$ by
\begin{equation}
dg(r)\cdot u=\omega(Dy(r)u,x(r)), \ \ \ \forall u\in T_rU.
\end{equation}
Assuming $y(r)$ is locally invertible, the immersion $\psi(r)=(y(r),g(r))$ is called the {\it dual immersion} of $\phi$ and the function $G$ such that
$g(r)=G(y(r))$, is the {\it Legendre transform} of $F$. Equation \eqref{eq:hsym} implies that $g$ is locally well-defined.

Denoting by $\overline{\nabla}$ and $\overline{h}$ the connection and metric of the dual immersion, we have that $\overline{h}=h$ and the $\overline{\nabla}$-Christoffel symbols
${\overline\Gamma_{ij}^k}$ are given by the following formula
\begin{equation}\label{eq:Christoffeldual1}
y_{r_ir_j}=\sum_k {\overline\Gamma_{ij}^k} y_{r_k}.
\end{equation}

\begin{lemma}
$\overline{\nabla}$ is the $h$-dual, or metric-dual of $\nabla$. In other words, the connection $\hat{\nabla}$ of the metric $h$ is given by
$$
\hat{\nabla}=\frac{\nabla+{\overline\nabla}}{2}.
$$
\end{lemma}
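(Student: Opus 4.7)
The plan is to show first that $\overline{\nabla}$ is the $h$-dual of $\nabla$ in the sense that
$$
e_k\bigl(h(e_i,e_j)\bigr)=h(\nabla_{e_k}e_i,e_j)+h(e_i,\overline{\nabla}_{e_k}e_j),
$$
and then deduce from this duality relation, in a standard way, that the average $\hat\nabla=(\nabla+\overline\nabla)/2$ is metric-compatible and torsion free, hence the Levi-Civita connection of $h$.

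For the first step I would differentiate the expression $h(e_i,e_j)=\omega(x_{r_i},y_{r_j})$ from \eqref{eq:hsym} in the direction $e_k$. Since $\omega$ is constant on $V$, the derivative produces two terms,
$$
e_k\bigl(h(e_i,e_j)\bigr)=\omega(x_{r_kr_i},y_{r_j})+\omega(x_{r_i},y_{r_kr_j}).
$$
Substituting the Christoffel expansion \eqref{eq:Christoffel1} for $x_{r_kr_i}$ and the dual expansion \eqref{eq:Christoffeldual1} for $y_{r_kr_j}$ and invoking the symmetry of $h$ in \eqref{eq:hsym} to recognize $\omega(x_{r_l},y_{r_j})=h(e_l,e_j)$ and $\omega(x_{r_i},y_{r_l})=h(e_i,e_l)$, one obtains precisely the $h$-duality identity above. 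The only subtle point is that two different bilinear expressions in $\omega$ both represent $h$, but this is exactly what the symmetry \eqref{eq:hsym} guarantees; this is the step I expect to require the most care.

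For the second step, interchanging the roles of $i$ and $j$ in the duality relation (and using the symmetry of $h$) gives the companion identity
$$
e_k\bigl(h(e_i,e_j)\bigr)=h(\overline{\nabla}_{e_k}e_i,e_j)+h(e_i,\nabla_{e_k}e_j).
$$
Averaging the two relations yields
$$
e_k\bigl(h(e_i,e_j)\bigr)=h(\hat{\nabla}_{e_k}e_i,e_j)+h(e_i,\hat{\nabla}_{e_k}e_j),
$$
so $\hat\nabla$ is metric compatible with $h$. Both $\nabla$ and $\overline{\nabla}$ are torsion free by the first lemma of this subsection (applied to $\phi$ and to the dual immersion $\psi$), so their half-sum $\hat{\nabla}$ is torsion free as well. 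By uniqueness of the Levi-Civita connection, $\hat{\nabla}$ must coincide with the connection of $h$, which is equivalent to the stated formula $\hat\nabla=(\nabla+\overline\nabla)/2$.
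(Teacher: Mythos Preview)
Your proof is correct, and your first step is exactly the paper's argument: differentiate $h(e_i,e_j)=\omega(x_{r_i},y_{r_j})$ in the $e_k$ direction, expand $x_{r_kr_i}$ and $y_{r_kr_j}$ via the Christoffel formulas \eqref{eq:Christoffel1} and \eqref{eq:Christoffeldual1}, and recognize the resulting $\omega$-pairings as values of $h$. The paper stops there, treating the passage from the duality relation to $\hat\nabla=(\nabla+\overline\nabla)/2$ as a standard fact about dual connections; you have spelled this out explicitly, which is a welcome addition but not a different approach.
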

\begin{proof}
We must prove that
\begin{equation}\label{eq:dualconnection}
\frac{\partial}{\partial r_k}h\left( e_i, e_j \right) =h\left( \nabla_{e_k} e_i , e_j\right)+h\left(e_i,  {\overline \nabla}_{e_k} e_j  \right).
\end{equation}
The first member of \eqref{eq:dualconnection} is equal to
$$
\omega\left(x_{r_kr_i}, y_{r_j}\right)+\omega\left(x_{r_i}, y_{r_kr_j}\right)
=\omega\left(\sum_l\Gamma_{ik}^lx_{r_l}, y_{r_j}\right)+\omega\left(x_{r_i}, \sum_l{\overline \Gamma}_{kj}^l y_{r_l}\right)
$$
$$
=\omega\left(x_*\left(\nabla_{e_k} e_i\right), y_*e_j\right)+\omega\left(x_*e_i, y_*\left({\overline \nabla}_{e_k} e_j \right)\right),
$$
which is exactly the second member of \eqref{eq:dualconnection}.
\end{proof}

Denote by $A(r):T_rU\to T_rU$ the invertible linear map satisfying the condition $Dy(r)=Dx(r)\cdot A(r)$. We shall make no distinction between the linear map $A(r)$
and its matrix $A(r)=\left(a_{ij}(r)\right)_{i,j=1,\cdots,2n}$ in the canonical basis $\{e_1,..,e_{2n}\}$.

\begin{proposition}\label{lemma:aconstant}
We have
${\overline\nabla}=A^{-1}\nabla A$.

\end{proposition}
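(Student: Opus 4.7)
My plan is to translate the matrix equation $Dy(r)=Dx(r)\cdot A(r)$ into the coordinate-free statement that $y_\ast=x_\ast\circ A$ at every point of $U$, then observe that both $\nabla$ and $\overline{\nabla}$ are the pull-backs of the flat ambient connection on $\R^{2n}$ by $x$ and $y$ respectively, and finally invoke the Leibniz rule. The first step is essentially definitional: the $i$-th column of $Dx(r)\cdot A(r)$ is $Dx(r)$ applied to the $i$-th column of $A(r)$, so $Dy=Dx\cdot A$ is precisely the statement that $y_\ast(v)=x_\ast(A(r)v)$ for every $v\in T_rU$, i.e.\ $y_\ast=x_\ast\circ A$.

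Second, I would read off what \eqref{eq:graphconnection} gives for $\nabla$ after projecting onto the $\R^{2n}$ factor. Because $\phi=(x,f)$ and $\xi=(0,1)$, the $\R^{2n}$ component of $\tilde{\nabla}_X\phi_\ast Y$ picks up no contribution from $h(X,Y)\xi$, which yields
\[
D(x_\ast Y)(X)=x_\ast(\nabla_X Y).
\]
Since $x$ is a local diffeomorphism onto $V$, this identity uniquely characterizes $\nabla$ as the pull-back of the flat ambient connection by $x$. The same argument applied to the dual immersion $\psi=(y,g)$, which is indeed an immersion precisely because $A$ (and hence $Dy$) is invertible, gives
\[
y_\ast(\overline{\nabla}_X Y)=D(y_\ast Y)(X).
\]

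With these two intrinsic descriptions in hand, the statement collapses to a single application of the Leibniz rule. For any vector fields $X,Y$ on $U$,
\[
x_\ast(A\cdot\overline{\nabla}_X Y)=y_\ast(\overline{\nabla}_X Y)=D(y_\ast Y)(X)=D(x_\ast(AY))(X)=x_\ast(\nabla_X(AY)),
\]
and cancelling the pointwise isomorphism $x_\ast$ gives $A\cdot\overline{\nabla}_X Y=\nabla_X(AY)$, i.e.\ $\overline{\nabla}=A^{-1}\nabla A$.

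I expect no real obstacle; the only delicate point is verifying that the tangential part of $\tilde{\nabla}_X\phi_\ast Y$ projects cleanly onto $D(x_\ast Y)(X)$ in the $\R^{2n}$ factor, but this is forced by the graph form of $\phi$ together with the choice $\xi=(0,1)$. As a sanity check one may instead differentiate $y_{r_i}=\sum_k a_{ki}x_{r_k}$ with respect to $r_j$, substitute \eqref{eq:Christoffel1} and \eqref{eq:Christoffeldual1}, and equate coefficients of $x_{r_l}$ to recover the matrix identity $A\overline{\Gamma}_j=\partial_{r_j}A+\Gamma_j A$, which is the index-level restatement of the intrinsic argument above.
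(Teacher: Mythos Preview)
Your proof is correct. The argument is a coordinate-free repackaging of exactly the computation the paper carries out: the paper differentiates the relation $y_{r_i}=\sum_j a_{ji}x_{r_j}$ with respect to $r_k$, substitutes the Christoffel-symbol formulas \eqref{eq:Christoffel1} and \eqref{eq:Christoffeldual1}, and reads off the identity \eqref{eq:Christoffel2a}, which is precisely the index form $A\overline{\Gamma}_k=\partial_{r_k}A+\Gamma_k A$ you mention as a sanity check. Your contribution is to recognize from the outset that \eqref{eq:Christoffel1} and \eqref{eq:Christoffeldual1} say nothing more than that $\nabla$ and $\overline{\nabla}$ are the $x$- and $y$-pullbacks of the flat connection on $\R^{2n}$, after which the result is a one-line Leibniz computation. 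This buys clarity and makes the role of $A$ as an intertwiner between the two pullback connections transparent; the paper's version is the same computation unpacked in indices.
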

\begin{proof}
We must prove that
$$
A{\overline\nabla}_{e_k}{e_j}=\nabla_{e_k}(Ae_j),
$$
for any $1\leq j,k\leq 2n$. This is equivalent to the following formula
\begin{equation}\label{eq:Christoffel2a}
\sum_{l}\overline{\Gamma}_{ik}^{l}a_{sl}=\sum_{j}a_{ji}\Gamma_{jk}^s+\frac{\partial a_{si}}{\partial r_k},
\end{equation}
for any $1\leq l\leq 2n$. But
\begin{equation}\label{eq:yxaexplicit}
y_{r_i}=\sum_{j}x_{r_j}a_{ji}
\end{equation}
Differentiating \eqref{eq:yxaexplicit} with respect to $r_k$ we obtain
\begin{equation}\label{eq:yxaconstant}
y_{r_ir_k}=\sum_{j}x_{r_jr_k}a_{ji}+\frac{\partial a_{ji}}{\partial r_k}x_{r_j}.
\end{equation}
Applying \eqref{eq:yxaexplicit} and \eqref{eq:yxaconstant} in \eqref{eq:Christoffeldual1} and using  \eqref{eq:Christoffel1} we obtain \eqref{eq:Christoffel2a}.
\end{proof}

In the next proposition we present a sufficient condition for  $A$ to be parallel with respect
to the metric connection $\hat{\nabla}$.

\begin{proposition}\label{prop:aparallel}
If $\nabla \left(A^2\right)=0$ then $\hat{\nabla} A=0$.
\end{proposition}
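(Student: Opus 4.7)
The plan is to translate the hypothesis $\nabla(A^2)=0$ and the identity $\overline{\nabla}=A^{-1}\nabla A$ from Proposition 2.6 into statements about the tensor derivatives $\nabla A$ and $\overline{\nabla}A$, and then exploit the averaging $\hat\nabla=(\nabla+\overline\nabla)/2$ to conclude that these two tensors cancel.

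First, I would unpack Proposition 2.6 as an operator identity: for any vector fields $X,Y$,
\begin{equation*}
\nabla_X(AY)=A\,\overline{\nabla}_X Y,
\end{equation*}
which rewrites as
\begin{equation*}
(\nabla_X A)(Y):=\nabla_X(AY)-A(\nabla_X Y)=A\bigl(\overline{\nabla}_X Y-\nabla_X Y\bigr).
\end{equation*}
This is the first of the two tensors I need.

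Next, I would compute $\overline\nabla_X A$ using the hypothesis. From $\overline{\nabla}_X Z=A^{-1}\nabla_X(AZ)$ applied to $Z=AY$,
\begin{equation*}
\overline{\nabla}_X(AY)=A^{-1}\nabla_X(A^2 Y).
\end{equation*}
Since $\nabla(A^2)=0$ means $\nabla_X(A^2 Y)=A^2\nabla_X Y$, this collapses to $\overline{\nabla}_X(AY)=A\nabla_X Y$, and hence
\begin{equation*}
(\overline{\nabla}_X A)(Y)=\overline{\nabla}_X(AY)-A(\overline{\nabla}_X Y)=A\bigl(\nabla_X Y-\overline{\nabla}_X Y\bigr).
\end{equation*}

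Finally, adding the two expressions shows $(\nabla_X A)(Y)+(\overline{\nabla}_X A)(Y)=0$, so $\nabla A=-\overline{\nabla}A$, and therefore $\hat\nabla A=\tfrac{1}{2}(\nabla A+\overline{\nabla}A)=0$. The only genuinely nontrivial step is recognizing that $\nabla(A^2)=0$ is precisely the condition that lets one reduce $A^{-1}\nabla_X(A^2 Y)$ to $A\nabla_X Y$, i.e.\ that the hypothesis is exactly what is needed to make $\overline\nabla A$ the negative of $\nabla A$; no further computation with Christoffel symbols is required, since Proposition 2.6 has already packaged the relationship between $\nabla$ and $\overline\nabla$ in coordinate-free form.
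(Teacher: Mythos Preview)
Your proof is correct and follows essentially the same route as the paper's. The paper expands $(\hat{\nabla}_X A)Y$ directly and then applies Proposition~\ref{lemma:aconstant} to rewrite the $\overline{\nabla}$-terms, arriving at $\tfrac{1}{2}\bigl(A^{-1}\nabla_X(A^2Y)-A\nabla_X Y\bigr)$ before invoking the hypothesis; you instead compute $(\nabla_X A)Y$ and $(\overline{\nabla}_X A)Y$ separately and observe they are negatives of each other, but the substitutions and the use of the hypothesis are identical.
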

\begin{proof}
We have that
\begin{equation*}
(\hat{\nabla}_XA)Y=\hat{\nabla}_X(AY)-A\hat{\nabla}_XY
\end{equation*}
\begin{equation*}
=\frac{1}{2}\left(  \nabla_X(AY)+\overline{\nabla}_X(AY) -A\nabla_XY-A\overline{\nabla}_XY    \right).
\end{equation*}
Now Proposition \ref{lemma:aconstant} implies that
\begin{equation*}
(\hat{\nabla}_XA)Y=\frac{1}{2}\left(  \nabla_X(AY)+A^{-1}{\nabla}_X(A^2Y) -A\nabla_XY-{\nabla}_X(AY)    \right)
\end{equation*}
\begin{equation*}
=\frac{1}{2}\left( A^{-1} \nabla_X(A^2Y)-A{\nabla}_XY  \right)
\end{equation*}
If $\nabla \left(A^2\right)=0$ then $\nabla_X(A^2Y)=A^2\nabla_X Y$ and this last expression vanishes.
\end{proof}

In this paper we are specially interested in the cases $A(r)=K_{2n}$ and $A(r)=J_{2n}$. Next result which is a corollary of Proposition \ref{prop:aparallel} shows that in this case $A$ is parallel with respect
to the metric connection $\hat{\nabla}$.

\begin{corollary}\label{prop:kj-aparallel}
If $A(r)=K_{2n}$ or $A(r)=J_{2n}$ then $\hat{\nabla} A=0$.
\end{corollary}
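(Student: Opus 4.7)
The plan is to reduce the claim to Proposition \ref{prop:aparallel} by verifying its hypothesis $\nabla(A^2)=0$ in each of the two cases. A standard fact about affine connections is that the identity endomorphism $\mathrm{Id}$ of $TU$, viewed as a $(1,1)$-tensor, is parallel with respect to any affine connection: indeed, for any vector fields $X,Y$,
$$
(\nabla_X \mathrm{Id})(Y)=\nabla_X(\mathrm{Id}(Y))-\mathrm{Id}(\nabla_X Y)=\nabla_X Y-\nabla_X Y=0.
$$
So if $A^2$ is (a scalar multiple of) $\mathrm{Id}$, the hypothesis of Proposition \ref{prop:aparallel} is automatic and we conclude $\hat\nabla A=0$.

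From (\ref{defk}), $K_{2n}$ is block-diagonal with blocks $-I_n$ and $I_n$, each of which squares to $I_n$, so $K_{2n}^2=I_{2n}$. From (\ref{defj}), $J_{2n}$ is the standard almost complex structure, hence $J_{2n}^2=-I_{2n}$. In either case $A^2=\pm I_{2n}$, so $\nabla(A^2)=0$ by the remark above, and Proposition \ref{prop:aparallel} yields the conclusion.

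There is no substantive obstacle; the only point worth flagging is that the constancy of the entries of $K_{2n}$ and $J_{2n}$ in the canonical basis $\{e_i\}$ does \emph{not} by itself imply $\nabla A=0$, since the Christoffel symbols $\Gamma_{ij}^k$ attached to the immersion need not vanish. What makes the argument go through is the purely algebraic collapse $A^2=\pm I_{2n}$, which reduces the verification of the hypothesis of Proposition \ref{prop:aparallel} to the universal parallelism of $\mathrm{Id}$.
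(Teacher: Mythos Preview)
Your proof is correct and follows exactly the route the paper intends: the corollary is stated without proof precisely because $K_{2n}^2=I_{2n}$ and $J_{2n}^2=-I_{2n}$ make $\nabla(A^2)=0$ immediate, so Proposition~\ref{prop:aparallel} applies directly. Your added remark that constancy of the matrix entries alone would not suffice is a nice clarification but not needed for the argument.
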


\subsection{Improper affine spheres}

An immersion $\phi:U\to\R^{2n+1}$ is an improper affine sphere with Blaschke normal vector $\xi=(0,1)$ if the volume determined by the metric $h$ coincides
with the volume $\phi^*\Theta_{\xi}$, where $\Theta_{\xi}(\cdot)=\det(\cdot,\xi)$ (see \cite{Nomizu94}). This is equivalent to $|\det(h)(r)|=det(Dx)^2(r)$, for any $r\in U$.

Let
\begin{equation}\label{defineB}
B(r)=Dx(r)\cdot A(r)\cdot Dx(r)^{-1}.
\end{equation}

\begin{lemma}
We have that
$$
\det(h)=\det(Dx)^2\det A.
$$
\end{lemma}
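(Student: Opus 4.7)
The plan is to compute $\det h$ directly by assembling the matrix of $h$ in the coordinate basis $\{e_1,\dots,e_{2n}\}$ and factoring it as a product whose determinant we can read off.

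First, I would invoke the previous lemma, equation \eqref{eq:hsym}, which gives
\[
h_{ij} := h(e_i,e_j) = \omega(x_{r_i}, y_{r_j}).
\]
Combining this with formula \eqref{eq:yxaexplicit}, namely $y_{r_j} = \sum_k x_{r_k}\, a_{kj}$, I get
\[
h_{ij} = \sum_k a_{kj}\,\omega(x_{r_i}, x_{r_k}).
\]
Introducing the auxiliary matrix $M = (M_{ik})$ with $M_{ik} = \omega(x_{r_i}, x_{r_k})$, this reads as the matrix product $h = M \cdot A$, hence
\[
\det h = \det M \cdot \det A.
\]

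The remaining step is to identify $\det M$ with $\det(Dx)^2$. Letting $J_\omega$ denote the constant matrix of $\omega$ in the standard basis of $\R^{2n}$ (which is the matrix $J_{2n}$ of \eqref{defj} up to sign, with $\det J_\omega = 1$), one has $M = (Dx)^T \cdot J_\omega \cdot (Dx)$. Therefore
\[
\det M = \det(Dx)^2 \det J_\omega = \det(Dx)^2,
\]
which plugged into the previous line gives the claim $\det h = \det(Dx)^2 \det A$.

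There is no real obstacle here: the argument is a one-line matrix factorization once one writes $h$ in the basis $\{e_i\}$ using \eqref{eq:hsym} and \eqref{eq:yxaexplicit}. The only point requiring a little care is the determinant of the Gram-like matrix $M$ of the constant symplectic form $\omega$ pulled back by $Dx$; taking $|\det|$ is unnecessary because $M$ is skew-symmetric of even size $2n$, so $\det M = \mathrm{Pf}(M)^2 \geq 0$ automatically, which is consistent with $\det M = \det(Dx)^2$. (Equivalently, one can observe that the lemma is really a statement about the matrix $B$ of \eqref{defineB}, since $\det B = \det A$ by conjugation, which could be used as a slightly more invariant alternative phrasing of the same computation.)
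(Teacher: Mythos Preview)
Your proof is correct and essentially the same as the paper's: both write $h$ as a matrix product involving $Dx^t$, the matrix of $\omega$, and $A$ (the paper routes through $B = Dx\cdot A\cdot Dx^{-1}$ to write $h = Dx^t\cdot J_{2n}\cdot B\cdot Dx$, which is your factorization $h = M\cdot A$ after substituting $B$). One minor quibble: the coordinate identity you cite as \eqref{eq:yxaexplicit} only appears later in the paper, so you should instead invoke the defining relation $Dy = Dx\cdot A$ directly.
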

\begin{proof}
The symmetric matrix $h$ has entries
$$
h_{ij}=\omega(Dx(r)\cdot e_i, Dx(r)\cdot A(r)\cdot e_j).
$$
Since $B(r)=Dx(r)\cdot A(r)\cdot Dx(r)^{-1}$, we have that $\det B=\det A$ and
$$
h_{ij}=\omega(Dx(r)\cdot e_i,B(r)\cdot Dx(r)\cdot e_j).
$$
In terms of matrices, $h=Dx(r)^t \cdot {J_{2n}}\cdot B(r)\cdot Dx(r)$. Hence
$$
\det(h)=\det(Dx)^2\det B,
$$
thus proving the lemma.
\end{proof}

\begin{corollary}
The metric $h$ is non-degenerate if and only if $A$ is invertible.
\end{corollary}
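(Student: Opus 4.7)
The proposal is essentially to read off the corollary directly from the lemma that precedes it. The lemma asserts
\[
\det(h)(r) = \det(Dx(r))^{2}\,\det A(r),
\]
so everything hinges on the fact that the factor $\det(Dx(r))^{2}$ is nonzero.

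First I would justify this nonvanishing. Since $\phi:U\to V\times\R\subset\R^{2n}\times\R$ is an immersion transversal to $\xi=(0,1)$ and $\phi=(x,f)$, transversality means that $\phi_{\ast}(T_rU)\oplus\R\xi = \R^{2n+1}$, which is equivalent to the projection onto the first factor being a linear isomorphism on $\phi_{\ast}(T_rU)$, i.e.\ $Dx(r):T_rU\to\R^{2n}$ is invertible. Hence $\det(Dx(r))\ne 0$ for every $r\in U$, and therefore $\det(Dx(r))^{2}>0$.

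Second, I would simply observe the chain of equivalences: $h$ is non-degenerate at $r$ iff $\det(h)(r)\ne 0$ iff $\det(Dx(r))^{2}\det A(r)\ne 0$ iff $\det A(r)\ne 0$ iff $A(r)$ is invertible. This gives both directions of the ``if and only if'' at once, so no separate argument is needed for each implication.

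There is no real obstacle here; the only subtle point is the invocation of transversality to guarantee that $Dx$ is invertible, and this has already been used implicitly throughout the preceding discussion (for instance in the very definition \eqref{eq:definea} of $A(r)$ via $Dy(r)=Dx(r)\cdot A(r)$, which requires $Dx(r)$ to be invertible in order for $A(r)$ to be well defined). So the proof is a one-line consequence of the lemma together with the standing transversality hypothesis.
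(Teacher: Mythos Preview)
Your proof is correct and is exactly the intended reading: the paper states this as an immediate corollary of the preceding lemma without further proof, and your argument spells out precisely why it follows, including the use of transversality to ensure $\det(Dx)\ne 0$.
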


\begin{proposition}\label{prop:Deta}
$\phi$ is an improper affine sphere if and only if $\det A$ is constant.
\end{proposition}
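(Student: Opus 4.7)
The plan is to combine the lemma just proved with the definition of an improper affine sphere. From that lemma, $\det h = \det(Dx)^2 \det A$; since $\phi$ is an immersion transversal to $\xi=(0,1)$, the component $x:U\to V$ is a local diffeomorphism, so $\det(Dx)^2 > 0$, and dividing yields the key identity
\begin{equation*}
|\det A| = |\det h|/\det(Dx)^2.
\end{equation*}

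Next I would translate the IAS condition into this ratio. That $\phi$ is an IAS means its Blaschke normal is parallel on $\R^{2n+1}$, i.e.\ a constant vector field; together with transversality to $(0,1)$ this forces the normal to be of the form $\lambda\cdot(0,1)$ for some scalar $\lambda$. For any such parallel $\xi$, the apolarity condition $\nabla(\phi^*\Theta_\xi)=0$ is automatic, so the only remaining Blaschke requirement is the volume normalization $\nu_h = \phi^*\Theta_\xi$. A straightforward rescaling argument (replacing $\xi$ by $\lambda\xi$ divides $h$ by $\lambda$ and multiplies $\phi^*\Theta_\xi$ by $\lambda$) shows this is equivalent to the ratio $|\det h|/\det(Dx)^2$ being a positive constant on $U$, which by the displayed identity is the same as $|\det A|$ being constant. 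To upgrade $|\det A|$ constant to $\det A$ constant, I would invoke the preceding corollary: non-degeneracy of $h$ forces $A$ to be invertible, so $\det A$ is nowhere zero on $U$; being continuous and non-vanishing, it has constant sign on each connected component of $U$, whence $|\det A|$ constant is equivalent to $\det A$ constant.

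I do not foresee a serious technical obstacle, as the proposition is essentially a direct packaging of the preceding lemma. The one interpretive subtlety worth flagging is that the literal definition of IAS given at the start of this subsection reads $|\det h|=\det(Dx)^2$, which would force $|\det A|=1$, whereas the proposition allows $\det A$ to take any constant value. The consistent reading, matching both the Monge--Amp\`ere formulation in the introduction and Blaschke's theorem in the parabolic case, is that ``$\xi=(0,1)$'' in the definition specifies only the Blaschke \emph{direction}, the magnitude $\lambda$ being a normalization determined a posteriori by the geometry; with that reading, the equivalence ``IAS iff $\det A$ constant'' follows immediately from the two steps above.
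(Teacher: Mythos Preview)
Your approach matches the paper's: it too reads the IAS condition as $\sqrt{|\det h|}=c\,\det(Dx)$ for some constant $c$ and then invokes the preceding lemma $\det h=\det(Dx)^2\det A$ to conclude. One caveat: the sentence ``together with transversality to $(0,1)$ this forces the normal to be of the form $\lambda\cdot(0,1)$'' is not right as stated---mere transversality of the immersion to $(0,1)$ does not pin down the Blaschke direction---but your final paragraph gives the correct interpretation (the proposition is about IAS with Blaschke \emph{direction} $(0,1)$, which is exactly how the paper's own proof treats it), and with that reading your argument goes through; your care about signs and the constant is in fact more explicit than the paper's.
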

\begin{proof}
The immersion $\phi$ is an improper affine sphere if and only if the metric volume in the tangent space is the same
as the volume determined by $\xi$. The metric volume is $\sqrt{\det(h)}$, while the volume
determined by $\xi$ is
$$
\det(\phi_{r_1},...,\phi_{r_{2n}},\xi)=\det(x_{r_1},...,x_{r_{2n}})=\det(Dx).
$$
Thus $\phi$ is an improper affine sphere if and only if  $\sqrt{\det(h)}=c\det(Dx)$, for some constant $c$. Since
$$
\sqrt{\det(h)}=\det(Dx) \sqrt{\det A},
$$
the proposition is proved.
\end{proof}

\section{Two distinguished classes of even dimensional improper affine spheres}

In this section we shall describe two classes of even dimensional improper affine spheres. The first one is obtained by taking $x$
as the center and $y$ as the mid-chord of a pair of points of a given pair of real Lagrangian submanifolds.
It is a natural generalization of the class of bi-dimensional improper affine spheres with indefinite metric.
The second one is a natural generalization of the class of bi-dimensional improper affine spheres with definite metric. IAS in this latter class are called special (\cite{Cortes00}).

In the center-chord case, the matrix $A$ is similar to $K_{2n}$, while in the special case the matrix $A$ is similar to $J_{2n}$.
By proposition \ref{prop:aparallel}, in both cases the matrix $A$ is parallel with respect to the metric connection. This fact was proved in \cite{Cortes00}
in the special IAS case.

\subsection{Center-chord improper affine spheres}\label{sec:Center-chord}

Let $U_1$, $U_2$ be open subsets of $\R^n$ such that $U=U_1\times U_2\subset\R^{2n}$ is simply connected. Let $\beta:U_1\to\R^{2n}$, $\gamma:U_2\to\R^{2n}$ be real Lagrangian embeddings and $\Lambda_1=\beta(U_1)$,   $\Lambda_2=\gamma(U_2)$.

Define the center $x:U\to\R^{2n}$ by
$$
x(s,t)=\frac{1}{2}\left(\beta(s)+\gamma(t)\right)
$$
and the half-chord $y:U\to\R^{2n}$ by
$$
y(s,t)=\frac{1}{2}\left( \gamma(t)-\beta(s) \right),
$$
where $s=(s_1,...,s_n)\in U_1$ and $t=(t_1,...,t_n)\in U_2$. Observe that since $\beta$ and $\gamma$ are Lagrangian,
$$
\omega(x_{s_i},y_{s_j})=\omega(x_{t_i},y_{t_j})=0.
$$
Moreover,
$$
\omega(x_{s_i},y_{t_j})=\omega(\beta_{s_i},\gamma_{t_j})=\omega(\gamma_{t_j},-\beta_{s_i})=\omega(x_{t_j},y_{s_i}),
$$
which implies in the existence of some function $f:U\to\R$ satisfying
$$
f_{s_i}=\omega(x_{s_i},y),\ \ f_{t_i}=\omega(x_{t_i},y), \text{for} \ i=1,\cdots,n .
$$

\begin{theorem}\label{ccias}
Assume that the tangent spaces of $\Lambda_1$ at $\beta(s)$ and of $\Lambda_2$ at $\gamma(t)$ are transversal. Then the immersion $\phi(s,t)=(x(s,t),f(s,t))$ is an immersion with $A(r)=A(s,t)=K_{2n}$. As a consequence, $\Sigma^{2n}= \text{Image}(\phi)\subset\mathbb R^{2n+1}$ is an improper affine sphere with Blaschke normal $\xi=(0_{2n},1)$ and Blaschke metric given by
\begin{equation}\label{eq:ccmetric}
h=\frac{1}{4}\left[
\begin{array}{cc}
0 & \omega(\beta_{s_i},\gamma_{t_j}) \\
 \omega(\beta_{s_i},\gamma_{t_j}) & 0
\end{array}
\right].
\end{equation}
\end{theorem}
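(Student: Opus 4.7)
The plan is to verify the theorem by direct computation, exploiting the very simple form of $x$ and $y$ as linear combinations of $\beta(s)$ and $\gamma(t)$, and then apply Proposition \ref{prop:Deta} together with the lemma identifying the entries of $h$.

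First I would use the coordinate system $r=(s_1,\dots,s_n,t_1,\dots,t_n)$ on $U$ and compute partial derivatives directly: $x_{s_i}=\tfrac12\beta_{s_i}$, $x_{t_i}=\tfrac12\gamma_{t_i}$, $y_{s_i}=-\tfrac12\beta_{s_i}=-x_{s_i}$, and $y_{t_i}=\tfrac12\gamma_{t_i}=x_{t_i}$. These identities read as a single matrix equation $Dy(r)=Dx(r)\cdot K_{2n}$ with $K_{2n}$ as in \eqref{defk}, so $A(s,t)=K_{2n}$. The transversality hypothesis says that $T_{\beta(s)}\Lambda_1$ and $T_{\gamma(t)}\Lambda_2$ are transversal in $\mathbb R^{2n}$; since the columns of $Dx(r)$ are $\tfrac12\beta_{s_i}$ and $\tfrac12\gamma_{t_j}$, this is precisely the statement that $Dx(r)$ is invertible, and consequently $\phi=(x,f)$ is an immersion transversal to $\xi=(0_{2n},1)$.

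Next I would apply Proposition \ref{prop:Deta}. We have $\det A = \det K_{2n} = (-1)^n$, which is constant, so the immersion is an IAS with Blaschke normal $\xi = (0_{2n},1)$.

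Finally, for the Blaschke metric I would use formula \eqref{eq:hsym}, namely $h(e_i,e_j)=\omega(x_{r_i},y_{r_j})$, and split into the three cases according to whether the indices are among the $s$-block or the $t$-block. Using the derivatives above,
\begin{equation*}
h(\partial_{s_i},\partial_{s_j}) = \omega\!\left(\tfrac12\beta_{s_i},-\tfrac12\beta_{s_j}\right) = -\tfrac14\,\omega(\beta_{s_i},\beta_{s_j}) = 0,
\end{equation*}
because $\beta$ is Lagrangian; similarly $h(\partial_{t_i},\partial_{t_j})=\tfrac14\omega(\gamma_{t_i},\gamma_{t_j})=0$ because $\gamma$ is Lagrangian. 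The remaining mixed block gives $h(\partial_{s_i},\partial_{t_j})=\tfrac14\,\omega(\beta_{s_i},\gamma_{t_j})$, which is exactly the off-diagonal entry in \eqref{eq:ccmetric}. The computation has no serious obstacle; the only substantive point is recognizing that the transversality hypothesis is exactly what is needed for $Dx$ to be invertible and that the Lagrangian assumptions on $\beta,\gamma$ are what kill the diagonal blocks of $h$.
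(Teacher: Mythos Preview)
Your proof is correct and follows essentially the same approach as the paper: compute the partial derivatives of $x$ and $y$ to see $A=K_{2n}$, then invoke Proposition~\ref{prop:Deta} and formula~\eqref{eq:hsym}. Your version is simply more explicit, spelling out why transversality makes $Dx$ invertible and why the Lagrangian hypotheses kill the diagonal blocks of $h$, whereas the paper leaves these points implicit.
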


\begin{proof}
The first statement follows from
$$
y_{s_i}=-\frac{1}{2}\beta_{s_i}=-x_{s_i};\ \ y_{t_j}=\frac{1}{2}\gamma_{t_j}=x_{t_j}.
$$
Thus by Proposition \ref{prop:Deta},  $\phi$ is an improper affine sphere with Blaschke metric given by equation \eqref{eq:hsym}.
\end{proof}

The function $f(s,t)$ can be geometrically interpreted as follows: Fix points $\beta(s_0)\in\Lambda_1$ and $\gamma(t_0)\in\gamma$ and take curves $\tilde{\beta}\subset\Lambda_1$
connecting $\beta(s_0)$ with $\beta(t)$ and $\tilde{\gamma}\subset\Lambda_2$ connecting $\gamma(t)$ with $\gamma(t_0)$. Denote by $S$ a $2$-surface
whose boundary is the concatenation of the chord $\gamma(t_0)\beta(s_0)$, $\tilde{\beta}$, the chord $\beta(s)\gamma(t)$ and $\tilde{\gamma}$. Then $f(s,t)$ is the area of $S$.
Observe that the Lagrangian condition for $\Lambda_1$
and $\Lambda_2$ implies that this area does not depend on the choice of $\tilde{\beta}$ and $\tilde{\gamma}$.

Under the transversality hypothesis of Theorem \ref{ccias}, the projection $\pi: T\mathbb R^{2n}\to\mathbb R^{2n}$ restricted to $\{(x(s,t),y(s,t)):(s,t)\in U\}=Y_F(V)\subset T\mathbb R^{2n}$ is regular and therefore $ f(s,t)=F(x(s,t))$, where the function $F:V\subset\mathbb R^{2n}\to \mathbb R$ is the center generating function of  $\Lambda_1\times \Lambda_2$ and satisfies the classical Monge-Amp\`ere equation, $\det[\partial^2F]=c$, for some constant $c$.

These improper affine spheres that are naturally related to the center-chord transform of a pair of real Lagrangian submanifolds, and its center generating function, shall be called {\it center-chord improper affine spheres}.
Singularities of these improper affine spheres (or equivalently of this class of solutions to the classical Monge-Amp\`ere equation) occur when the transversality hypothesis fail, and these shall be studied in section \ref{singularsection}.

\subsection{Special improper affine spheres}

Let $U$ be open subset of $\C^n$. Given a holomorphic map $H:U \to\C$, we write
\begin{equation}\label{holom-H}
H(z)=\tilde P(z,\bar z)+i\tilde Q(z, \bar z),
\end{equation}
with $\tilde P,\tilde Q:U\to\R$. Then, for $z=s+it$, $z=(z_1,..,z_n)$, $s=(s_1,..,s_n)$, $t=(t_1,..,t_n)$, we define
\begin{equation}\label{hol2}
P(s,t)=\tilde P(s+it,s-it), \ Q(s,t)=\tilde Q(s+it,s-it).
\end{equation}
Hence $\frac{\partial Q}{\partial s}=(\frac{\partial Q}{\partial s_1},...,\frac{\partial Q}{\partial s_n} )$ and
$\frac{\partial Q}{\partial t}=(\frac{\partial Q}{\partial t_1},...,\frac{\partial Q}{\partial t_n} )$. In this setting, we define
$$
x(s,t)=(x^{(1)}(s,t),x^{(2)}(s,t))=\left(s, \frac{\partial Q}{\partial t}  \right)
$$
and
$$
y(s,t)=(y^{(1)}(s,t),y^{(2)}(s,t))=\left(t, \frac{\partial Q}{\partial s}\right).
$$
Let
$$
f(s,t)= Q(s,t)-\sum_{k=1}^n t_k\cdot \frac{\partial Q}{\partial t_k}(s,t).
$$
A straightforward calculation shows that equation \eqref{defin1} holds.

\begin{theorem}
At points $(s,t)$ such that \begin{equation}\label{transvspecial} \det(\frac{\partial^2 Q}{\partial t^2})\neq 0 \ ,\end{equation} the map $\phi(s,t)=(x(s,t),f(s,t))$ is an immersion with $A(r)=A(s,t)=J_{2n}$.
As a consequence, $\Sigma^{2n}= \text{Image}(\phi)\subset\mathbb R^{2n+1}$ is an improper affine sphere with Blaschke normal $\xi=(0_{2n},1)$ and Blaschke metric given by
\begin{equation}\label{eq:specialmetric}
h=\left[
\begin{array}{cc}
\frac{\partial^2 Q}{\partial t^2}  & 0 \\
0  &  \frac{\partial^2 Q}{\partial t^2}
\end{array}
\right].
\end{equation}
\end{theorem}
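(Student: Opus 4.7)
The plan is to verify in turn: (i) $\phi$ is an immersion under the hypothesis (\ref{transvspecial}), (ii) the matrix $A(s,t)$ defined by $Dy=Dx\cdot A$ equals the constant $J_{2n}$, and (iii) the Blaschke metric has the block form (\ref{eq:specialmetric}). Once the second point is established, Proposition \ref{prop:Deta} (applied with $\det J_{2n}=1$) immediately yields the IAS conclusion with Blaschke normal $(0,1)$.

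The starting point is to extract the relevant Cauchy--Riemann identities. Writing $z_k=s_k+it_k$ and $H=\tilde P+i\tilde Q$, the equations $\partial H/\partial \bar z_k=0$ give, for each $k=1,\dots,n$,
\[
\frac{\partial P}{\partial s_k}=\frac{\partial Q}{\partial t_k},\qquad \frac{\partial P}{\partial t_k}=-\frac{\partial Q}{\partial s_k}.
\]
Differentiating these once more and invoking the symmetry of mixed partials of $P$ produces the two key identities on the Hessian blocks of $Q$: as $n\times n$ matrices,
\[
Q_{ss}=-Q_{tt}\qquad\text{and}\qquad Q_{st}=Q_{ts}\text{ (symmetric).}
\]

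Next I would write down the Jacobians $Dx$ and $Dy$ in block form with respect to the ordered coordinates $(s_1,\dots,s_n,t_1,\dots,t_n)$:
\[
Dx=\left[\begin{array}{cc}I_n & 0\\ Q_{ts} & Q_{tt}\end{array}\right],\qquad Dy=\left[\begin{array}{cc}0 & I_n\\ Q_{ss} & Q_{st}\end{array}\right].
\]
Expanding the first along the $s$-columns gives $\det Dx=\det Q_{tt}$, so the hypothesis (\ref{transvspecial}) ensures that $x$ is a local diffeomorphism, and hence that $\phi$ is an immersion into $\mathbb R^{2n+1}$. A direct block multiplication yields
\[
Dx\cdot J_{2n}=\left[\begin{array}{cc}0 & I_n\\ -Q_{tt} & Q_{ts}\end{array}\right],
\]
and the two Cauchy--Riemann identities above identify this matrix with $Dy$ exactly. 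Therefore $A(s,t)=J_{2n}$, a constant matrix of determinant $1$, and Proposition \ref{prop:Deta} delivers the improper affine sphere conclusion.

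Finally, I would compute the Blaschke metric using (\ref{eq:hsym}), evaluating $h_{ij}=\omega(x_{r_i},y_{r_j})$ on each of the four $n\times n$ blocks with the explicit pairing $\omega(u,v)=(u^{(1)})^{T}v^{(2)}-(u^{(2)})^{T}v^{(1)}$. The two off-diagonal blocks vanish, the cancellation coming directly from the symmetry of $Q_{st}$, while each diagonal block reduces to a single Hessian block of $Q$; applying $Q_{ss}=-Q_{tt}$ then brings both diagonal blocks into the common form of (\ref{eq:specialmetric}). The only real obstacle is sign and index bookkeeping (the choice of $\omega$, row-versus-column convention for gradients, and the slot in which each Cauchy--Riemann equation is applied), but once the two Hessian identities are in hand the rest is mechanical block-matrix algebra, closely mirroring the proof of Theorem \ref{ccias} in the center-chord case.
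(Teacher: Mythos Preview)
Your proof is correct and follows essentially the same route as the paper: both arguments use the Cauchy--Riemann equations to obtain $Q_{ss}=-Q_{tt}$ and the symmetry of $Q_{st}$, then verify $Dy=Dx\cdot J_{2n}$ and invoke Proposition \ref{prop:Deta}. The only difference is presentation---you work with $2n\times 2n$ block matrices, while the paper records the same information column by column as $y_{s_i}=-x_{t_i}$ and $y_{t_i}=x_{s_i}$; your treatment of the metric computation is also slightly more detailed than the paper's, which simply refers back to \eqref{eq:hsym}.
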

\begin{proof}
The first statement follows from
$$
y_{s_i}=\left( 0, \frac{\partial^2 Q}{\partial s_i\partial s_j}\right)=-\left( 0, \frac{\partial^2 Q}{\partial t_i\partial t_j}\right)=-x_{t_i},
$$
where the center equality comes from Cauchy-Riemann equations. Similarly
$$
y_{t_i}=\left( e_i, \frac{\partial^2 Q}{\partial t_i\partial s_j}   \right)=
\left( e_i, \frac{\partial^2 Q}{\partial s_i\partial t_j}   \right)=x_{s_i}.
$$
Thus by Proposition \ref{prop:Deta},  $\phi$ is an improper affine sphere with Blaschke metric given by equation \eqref{eq:hsym}.
\end{proof}

It is worthwhile to describe this construction in terms of the complex variables $(z,\zb)$, $z=s+it$, $\zb=s-it$. Let
\begin{equation}\label{spcc}
\eta(z,\zb)=x(z,\zb)+iy(z,\zb);\ \ \zeta(z,\zb)=x(z,\zb)-iy(z,\zb).
\end{equation}
Then  $\eta_{\zb}=\zeta_{z}=0$ and so $\eta=\eta(z)$, $\zeta=\zeta(\zb)=\overline{\eta(z)}$.
Moreover, the submanifolds defined by $\eta$ and $\zeta$ are Lagrangian. Finally
\begin{equation}\label{specialcc}
x(z,\zb)=\frac{1}{2}\left( \eta(z)+\zeta(\zb) \right);\ \ y(z,\zb)=\frac{i}{2}\left( \zeta(\zb)-\eta(z) \right).
\end{equation}

We conclude that this immersion has the same algebraic structure as the one in section \ref{sec:Center-chord},  substituting real variables $(s,t)$ by  complex  variables $(z,\zb)$, real immersions $(\beta,\gamma)$ by  holomorphic and anti-holomorphic immersions $(\eta,\zeta)$, with the condition $\zeta=\bar\eta$ (and with the $i$ in (\ref{spcc})-(\ref{specialcc})).
In other words, according to (\ref{complexinvpsi})-(\ref{complexpsi}) and (\ref{spcc})-(\ref{specialcc}), we have:
\begin{proposition}
 Special IAS are naturally related to the rotated center-chord transform of a pair of complex conjugate Lagrangian submanifolds of the complexified (real-$2n$-dimensional) symplectic  space.
\end{proposition}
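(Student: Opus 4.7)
The plan is to verify that the pair $(\eta,\zeta)$ constructed from the holomorphic map $H$ in \eqref{spcc} satisfies exactly the three properties characterizing a rotated center-chord transform of a pair of complex conjugate Lagrangian submanifolds of $(V^\C,\omega)$, as set up in \eqref{complexinvpsi}--\eqref{complexpsi}: (i) $\eta$ depends only on $z$ and $\zeta$ only on $\bar z$, with $\zeta=\overline{\eta}$; (ii) the images $\eta(U)$ and $\zeta(U)$ are Lagrangian in $(V^\C,\omega)$; and (iii) the reconstruction formula \eqref{specialcc} for $(x,y)$ in terms of $(\eta,\zeta)$ coincides with \eqref{complexinvpsi}.

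For (i), I would compute $\eta=x+iy=(z,\,\partial Q/\partial t+i\,\partial Q/\partial s)$ directly from the definitions of $x$ and $y$ stated above the proposition. The key calculation is that, writing $H=P+iQ$ and applying the Cauchy--Riemann equations $\partial_{s_k}P=\partial_{t_k}Q$ and $\partial_{t_k}P=-\partial_{s_k}Q$ componentwise, one obtains the identity $\partial H/\partial z_k=\partial Q/\partial t_k+i\,\partial Q/\partial s_k$. This rewrites $\eta$ as $\eta(z)=(z,\partial_z H(z))$, manifestly holomorphic and independent of $\bar z$; conjugating gives $\zeta(\bar z)=(\bar z,\overline{\partial_z H(z)})=\overline{\eta(z)}$.

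For (ii), the image of $\eta$ is the graph of the holomorphic gradient $z\mapsto\partial_z H$, so the pullback of $\omega$ yields $\eta^\ast\omega=\sum_k dz_k\wedge d(\partial H/\partial z_k)=d(dH)=0$, with vanishing guaranteed by the symmetry of the complex Hessian of $H$. The analogous statement $\zeta^\ast\omega=0$ follows from $\bar\omega=\omega$ together with $\zeta=\bar\eta$, i.e.\ the image of $\zeta$ is the complex conjugate of a Lagrangian and hence Lagrangian.

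For (iii), the formulas \eqref{specialcc} are literally \eqref{complexinvpsi} under the identifications $\tilde x_+=\eta(z)$ and $\tilde x_-=\zeta(\bar z)$, subject to the reality constraint $\tilde x_-=\overline{\tilde x_+}$ that expresses complex conjugacy of the two Lagrangian submanifolds. The main (and essentially only) nontrivial step in the proof is establishing the CR identity $\partial_{z_k}H=\partial_{t_k}Q+i\,\partial_{s_k}Q$ in the multivariable setting; once this is in hand, every other verification is formal bookkeeping of the correspondence between \eqref{specialcc} and \eqref{complexinvpsi}.
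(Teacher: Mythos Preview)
Your proposal is correct and follows essentially the same route as the paper: the paper's argument is the short paragraph preceding the proposition, which defines $\eta=x+iy$, $\zeta=x-iy$, asserts $\eta_{\bar z}=\zeta_z=0$ and $\zeta=\bar\eta$, states that the resulting submanifolds are Lagrangian, and matches \eqref{specialcc} with \eqref{complexinvpsi}. Your version simply supplies the explicit computation $\eta=(z,\partial_z H)$ via Cauchy--Riemann and the pullback check $\eta^*\omega=0$, which the paper leaves implicit.
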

\begin{remark} In spite of this similarity between these two types of IAS, we will reserve the name {\it center-chord IAS} for the ones related to the center-chord transform of a pair of real Lagrangian submanifolds, keeping the name {\it special IAS} for the complex  case. But we stress the fact that, in both cases, center-chord \emph{and} special, each of these IAS is a \emph{real} hypersurface of $\mathbb R^{2n+1}$. \end{remark}

Thus, when condition (\ref{transvspecial}) is satisfied, the projection $\pi: T\mathbb R^{2n}\to\mathbb R^{2n}$ restricted to $\{(x(s,t),y(s,t)):(s,t)\in U\}=Y_F(V)\subset T\mathbb R^{2n}$ is regular and therefore $ f(s,t)=F(x(s,t))$, where the function $F:V\subset\mathbb R^{2n}\to \mathbb R$ satisfies the classical Monge-Amp\`ere equation, $\det[\partial^2F]=c$.
Singularities of these special IAS occur when condition (\ref{transvspecial})   fails. These singularities will be studied in section \ref{singularsection}.

\subsection{One parameter families}

Given a center-chord IAS $\phi$, there exists a one parameter family of center-chord IAS $\phi_{\lambda}$, $\lambda\in\R,\lambda\neq 0$ with $\phi_{1}=\phi$  such that
$\phi_{\lambda}$ has the same Blaschke metric as $\phi_{1}$, for any $\lambda$, and $\phi_{\lambda_1}$ is not affinely equivalent to $\phi_{\lambda_2}$, if
$\lambda_1\neq \lambda_2$. In fact, take $\beta_{\lambda}(s)=\lambda\beta(s)$ and $\gamma_{\lambda}(s)=|\lambda|^{-1}\gamma(s)$. For
$\lambda=-1$, we get the conjugate IAS. It is not difficult to verify that any center-chord IAS with the same Blaschke metric is affinely equivalent
to some IAS of this family.

Similarly, given a special IAS $\phi$, there exists a one parameter family of center-chord IAS $\phi_{\tau}$, $\tau\in[0,2\pi]$, with $\phi_{0}=\phi$  such that
$\phi_{\tau}$ has the same Blaschke metric as $\phi_{0}$, for any $\tau$, and $\phi_{\tau_1}$ is not affinely equivalent to $\phi_{\tau_2}$, if
$\tau_1\neq \tau_2$. In fact, take $H_{\tau}(z)=e^{2i\tau}H(e^{-i\tau}z)$. For
$\tau=\frac{\pi}{2}$, we get the conjugate IAS. It is not difficult to verify that any special IAS with the same Blaschke metric is affinely equivalent
to some IAS of this family.

In case $n=1$ these results were proved in \cite{Simon93A} for any affine sphere, not necessarily improper.

\subsection{Other examples of even dimensional IAS}\label{eg}

For $n=1$, any IAS is center-chord or special. Next examples show that this is not true for $n>1$.

\begin{example}
Consider $Dx=I_{2n}$ and $a\in sp(2n)$. In this case $f$ is quadratic map. For example, take $n=2$ and
\[
A=\left[
\begin{array}{cccc}
1 & 1 & 0 & 0\\
0 & 1 & 0 & 0\\
0 & 0 & -1 & 0\\
0 & 0 & -1 & -1
\end{array}
\right].
\]
Then $f=x_1x_3+x_2x_4+x_2x_3$ and since $A$ is not similar to $K_{4}$ or $J_{4}$, $(x,f)$ is neither center-chord nor special. Observe that, in this example,
the Blaschke connection $\nabla$ and its dual $\overline{\nabla}$ are flat and thus $A$ is parallel with respect to $\hat{\nabla}$.
\end{example}

\begin{example}
If one considers the product of a center-chord IAS with a special IAS, one obtains a new IAS.
\end{example}

\begin{example}
Consider $n=2$ and $f(x_1,x_2,x_3,x_4)=x_1x_3+x_2x_4+h(x_2,x_3)$. Then
\[
D^2f=\left[
\begin{array}{cccc}
0 & 0 & 1 & 0 \\
0 & h_{x_2x_2} & h_{x_2x_3} & 1 \\
1 & h_{x_2x_3} & h_{x_3x_3} &0 \\
0 & 1 & 0 & 0
\end{array}
\right]
\]
and so $\det(D^2f)=1$. The corresponding matrix $A$ is given by
\[
A=\left[
\begin{array}{cccc}
-1 & -h_{x_2x_3} & -h_{x_3x_3} & 0 \\
0 &  -1 &  0  & 0 \\
0 &  0 &  1 &0 \\
0 & h_{x_2x_2} & h_{x_2x_3} & 1
\end{array}
\right]
\]
and so this IAS is neither center-chord nor special. If $h(x_2,x_3)$ is not quadratic, the dual connection $\overline{\nabla}$ is not flat and it is not difficult to verify
that the matrix $A$ is not parallel with respect to the metric connection $\hat{\nabla}$.
\end{example}

\section{Center-chord and Special IAS as solutions of Exterior Differential Systems }

In this section we shall characterize the center-chord and the special IAS as solutions of certain Exterior Differential Systems (EDS).

\subsection{Center-chord IAS as solutions of an EDS}

Define the involution  ${\mathcal K}_{4n}:\R^{4n}\to\R^{4n}$ by
$$
{\mathcal K}_{4n}(v_1,v_2)=\left( v_2, v_1   \right).
$$
The symplectic form $\Omega_1$ given by \eqref{eq:Omega1} is equivalently defined by
$$
\Omega_1 \left( v, w    \right)=\Omega\left( v,  {\mathcal K}_{4n} w    \right).
$$
Consider the Exterior Differential System ${\mathcal E}_1=\{\Omega,\Omega_1\}$.
The main result of this section is the following:

\begin{theorem}\label{thm:EDS1}
The solutions of the EDS ${\mathcal E}_1$ are exactly the center-chord IAS.
\end{theorem}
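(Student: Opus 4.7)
The plan is to transport the EDS $\mathcal{E}_1=\{\Omega,\Omega_1\}$ via the center-chord diffeomorphism $\Psi:TV\to V\times V$ of (\ref{psi}) and reduce the problem to the transparent EDS $\{\omega_+,\omega_-\}$ on $V\times V$, whose $2n$-dimensional integral manifolds are locally products of Lagrangian submanifolds of $(V,\omega)$. The computational input is the companion identity to (\ref{pullbackOmega}): substituting $dx_i=\frac{1}{2}(dx^+_i+dx^-_i)$ and $dy_i=\frac{1}{2}(dx^+_i-dx^-_i)$ into (\ref{eq:Omega1}) and collecting terms (the cross-pieces cancel pairwise) gives $(\Psi^{-1})^*\Omega_1=\frac{1}{2}(\omega_++\omega_-)$. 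Combined with (\ref{pullbackOmega}), this identifies the differential ideal generated by $\Omega$ and $\Omega_1$ with the one obtained by pulling $\{\omega_+,\omega_-\}$ back through $\Psi$, so integral manifolds of $\mathcal{E}_1$ in $TV$ correspond bijectively, via $\Psi$, to integral manifolds of $\{\omega_+,\omega_-\}$ on $V\times V$.

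The forward implication is then immediate: a center-chord IAS arises, as in Section \ref{sec:Center-chord}, from the Lagrangian image $L(U)=\Psi^{-1}(\Lambda_1\times\Lambda_2)\subset TV$ of a product of real Lagrangian submanifolds $\Lambda_1,\Lambda_2$ of $(V,\omega)$. Since $\omega_+$ and $\omega_-$ are pulled back from the two factors of $V\times V$, they vanish identically on $\Lambda_1\times\Lambda_2$, hence $L^*\Omega=0=L^*\Omega_1$.

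For the converse, I would take a $2n$-dimensional integral manifold $L$ of $\mathcal{E}_1$ and set $\tilde L=\Psi(L)$, so that $\omega_+|_{\tilde L}=0=\omega_-|_{\tilde L}$. The restrictions of the factor projections $\pi_i:V\times V\to V$ to $\tilde L$ have rank at most $n$, because $(\pi_i)_*T_p\tilde L$ is $\omega$-isotropic in $V$; on the other hand, any vector lying in both vertical distributions sits in $T_p\tilde L\cap T(\{x\}\times V)\cap T(V\times\{y\})=\{0\}$, which forces $\dim\ker(\pi_1)_*|_{\tilde L}+\dim\ker(\pi_2)_*|_{\tilde L}\leq 2n$. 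Combining the two inequalities pins both ranks to $n$ and makes the two vertical distributions complementary. Each is integrable (as the kernel of a constant-rank map), so Frobenius produces local coordinates $(s,t)$ on $\tilde L$ in which $\pi_1\circ\tilde L$ depends only on $s$ and $\pi_2\circ\tilde L$ only on $t$. Thus $\tilde L$ is locally a product $\Lambda_1\times\Lambda_2$ of $n$-dimensional Lagrangian immersed submanifolds of $(V,\omega)$, and $L=\Psi^{-1}(\Lambda_1\times\Lambda_2)$ is a center-chord IAS by Theorem \ref{ccias}.

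I expect the main delicacy to be the handling of the singular locus, where the transversality hypothesis of Theorem \ref{ccias} fails and $\Psi^{-1}(\Lambda_1\times\Lambda_2)$ ceases to project as a graph to $V$: these are precisely the admissible singularities of the associated improper affine map, studied in the final section of the paper. On the regular stratum the reduction to $\{\omega_+,\omega_-\}$ is clean, and a minor bookkeeping step confirms that the $\Lambda_i$ extracted from the two projections are genuinely Lagrangian (not merely isotropic), since $\dim\Lambda_i=n$ combined with $\omega_\pm|_{\tilde L}=0$ forces maximal isotropy.
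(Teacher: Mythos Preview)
Your proof is correct and takes a genuinely different route from the paper. The paper argues pointwise via the linear map $A(r)$ defined by $Dy=Dx\cdot A$: it first shows (Lemma \ref{lemma:Center-chordEq}) that $L^*\Omega_1=0$ is equivalent to $A(r)^2=I_{2n}$, hence to $A(r)$ being conjugate to $K_{2n}$, and then (Proposition \ref{prop:Center-chordInv}) decomposes the tangent space into the $\pm 1$-eigenspaces of $B(r)=Dx\cdot A\cdot Dx^{-1}$ and checks that the maps $p_1(x)=x+y(x)$, $p_2(x)=x-y(x)$ have rank $n$ with Lagrangian images. You instead push the whole EDS through $\Psi$ and observe that $(\Psi^{-1})^*\{\Omega,\Omega_1\}$ is generated by $\{\omega_+,\omega_-\}$, reducing the question to the standard fact that bi-isotropic $2n$-manifolds in $V\times V$ are local products of Lagrangians. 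The two arguments are secretly the same computation---your projections $\pi_i\circ\Psi$ are exactly the paper's $p_i$, and your kernel splitting is the eigenspace splitting of $B$---but your packaging is more conceptual and makes the role of the center-chord transform transparent, while the paper's version stays inside the affine-differential-geometric framework built in Section 2 (the map $A$, the dual connection, etc.) and thereby parallels verbatim the argument for special IAS with $J_{2n}$ in place of $K_{2n}$. Your caveat about the singular locus is fair, but note that the paper's proof also works only where $x$ is locally invertible (Proposition \ref{prop:Center-chordInv} assumes $\phi$ is transversal to $(0,1)$), so you are not losing anything relative to the original.
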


We begin with the following lemma:

\begin{lemma}\label{lemma:Center-chordEq}
Consider a $\Omega$-Lagrangian immersion $L$ and denote by $\mathcal{L}$ the image of $U$ by $L$.
The following statements are equivalent:
\begin{enumerate}
\item
$\mathcal{L}$ is $\Omega_1$-Lagrangian, for any $r\in U$.

\item
$\mathcal{L}$ is ${\mathcal K}_{4n}$-invariant, for any $r\in U$.

\item
$A(r)^2=I_{2n}$, for any $r\in U$.

\item
$A(r)$ is equivalent to $K_{2n}$, for any $r\in U$.

\end{enumerate}

\end{lemma}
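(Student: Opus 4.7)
My plan is to prove the chain $(1)\Leftrightarrow(2)\Leftrightarrow(3)\Leftrightarrow(4)$, working in the local frame $L_{\ast}e_i=(x_{r_i},y_{r_i})$ of $T_{L(r)}\mathcal{L}$, in which the defining relation $Dy=Dx\cdot A$ reads $y_{r_i}=\sum_j a_{ji}x_{r_j}$. The key algebraic observation, already recorded just above the statement, is that $\Omega_1(v,w)=\Omega(v,\mathcal{K}_{4n}w)$; this identity, combined with the hypothesis that $\mathcal{L}$ is $\Omega$-Lagrangian, reduces (1) to the geometric invariance condition (2), which in turn translates by a coordinate computation into a matrix equation on $A$. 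Only the final step $(3)\Rightarrow(4)$ will require additional structural information from the bilinear form $h$.

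For $(1)\Leftrightarrow(2)$ I would compute $(L^{\ast}\Omega_1)(e_i,e_j)=\Omega(L_{\ast}e_i,\mathcal{K}_{4n}L_{\ast}e_j)$ via the identity. Since $T_{L(r)}\mathcal{L}$ equals its own $\Omega$-orthogonal complement (the Lagrangian property), vanishing of the right-hand side for all $i,j$ is equivalent to $\mathcal{K}_{4n}(T_{L(r)}\mathcal{L})\subseteq T_{L(r)}\mathcal{L}$, and because $\mathcal{K}_{4n}$ is an involution this is exactly $\mathcal{K}_{4n}$-invariance. For $(2)\Leftrightarrow(3)$, applying $\mathcal{K}_{4n}$ to the frame gives $(y_{r_i},x_{r_i})$; writing this as $\sum_k c_{ki}(x_{r_k},y_{r_k})$ and matching first coordinates (using linear independence of $\{x_{r_k}\}$, which holds because $Dx$ is invertible) forces $c_{ki}=a_{ki}$, and matching second coordinates then yields $x_{r_i}=\sum_j(A^2)_{ji}x_{r_j}$, i.e.\ $A^2=I_{2n}$. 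The converse is immediate by running the same computation backwards.

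The implication $(4)\Rightarrow(3)$ is trivial, since $K_{2n}^2=I_{2n}$ is invariant under conjugation. The main obstacle lies in $(3)\Rightarrow(4)$: knowing $A^2=I_{2n}$ merely constrains the spectrum of $A$ to $\{\pm 1\}$ without pinning down the multiplicities. To force each to equal $n$, I would decompose $T_rU=E_+\oplus E_-$ into the $\pm 1$-eigenspaces of $A$. For $u,v\in E_+$,
\[
h(u,v)=\omega(x_{\ast}u,y_{\ast}v)=\omega(x_{\ast}u,x_{\ast}(Av))=(x^{\ast}\omega)(u,v),
\]
which is antisymmetric in $(u,v)$; combined with the symmetry of $h$ (shown earlier in the paper) this forces $h|_{E_+\times E_+}=0$. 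An identical calculation on $E_-$ gives $h(u,v)=-(x^{\ast}\omega)(u,v)$, which the same antisymmetry-versus-symmetry argument again forces to vanish, so $h|_{E_-\times E_-}=0$. Finally, because $x$ is a local diffeomorphism into the symplectic $V$, the form $x^{\ast}\omega$ is non-degenerate; together with invertibility of $A$ (from $A^2=I_{2n}$), the identity $h(u,v)=(x^{\ast}\omega)(u,Av)$ makes $h$ non-degenerate. A non-degenerate symmetric bilinear form on $E_+\oplus E_-$ vanishing on each summand must pair them non-degenerately, forcing $\dim E_+=\dim E_-=n$ and concluding that $A$ is conjugate to $K_{2n}$.
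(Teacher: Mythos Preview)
Your argument is correct. The proofs of $(1)\Leftrightarrow(2)$ and $(2)\Leftrightarrow(3)$ are essentially identical to the paper's: the paper phrases $(1)\Rightarrow(2)$ as a maximal-isotropic argument (showing that $\mathrm{span}\{T_{L(r)}\mathcal{L},\mathcal{K}_{4n}w_0\}$ is still $\Omega$-isotropic, hence cannot be larger), and $(2)\Leftrightarrow(3)$ as the same coordinate matching you give.

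For $(3)\Rightarrow(4)$ the paper takes a slightly different path. It passes to the conjugate $B=Dx\cdot A\cdot Dx^{-1}$, notes that $B\in sp(2n)$ (i.e.\ $J_{2n}B=-B^{t}J_{2n}$, which is the matrix form of the symmetry of $h$), and uses this relation to check that $J_{2n}$ carries the $-1$-eigenspace of $B$ injectively to the $+1$-eigenspace of $B^{t}$, forcing equal dimensions. Your route---showing each eigenspace of $A$ is $h$-isotropic and invoking non-degeneracy of $h$---encodes exactly the same symplectic constraint, but stays with $A$ and $h$ rather than introducing $B$ and $J_{2n}$. It is arguably cleaner and ties the conclusion directly to the Blaschke metric; the paper's version, on the other hand, makes the role of the ambient symplectic structure on $V$ more explicit.
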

\begin{proof}
\smallskip\noindent{\bf (1)$\Longleftrightarrow $(2)}.
We start with (1)$\Longrightarrow$(2). Fix $w_0\in T_{L(r)}\mathcal{L}$  and take $w_1,w_2\in T_{L(r)}\mathcal{L}$. Then
$$
\Omega(w_1+\lambda {\mathcal K}_{4n}w_0,w_2+\mu {\mathcal K}_{4n}w_0)=\lambda \Omega_1(w_1,w_0)-\mu \Omega_1(w_2,w_0)=0.
$$
Thus $span\{T_{L(r)}\mathcal{L},{\mathcal K}_{4n}w_0\}$ is $\Omega$-Lagrangian and thus ${\mathcal K}_{4n}w_0\in T_{\phi(r)}\mathcal{L}$, which implies (2). The implication
(2)$\Longrightarrow$(1) is trivial.

\smallskip\noindent{\bf (2)$\Longleftrightarrow $(3)}.
Any vector $(w,w')\in T_{L(r)}\mathcal{L}$ can be written as $w=Dx(r)\cdot u$, $w'=Dx(r)\cdot A(r)\cdot u$.   We have to check when $(w',w)\in T_{L(r)}\mathcal{L}$.

This last condition occurs if and only if $w'=Dx(r)\cdot u_1$, $w'=Dx(r)\cdot A(r)\cdot u_1$, for some $u_1\in T_rU$. But this is equivalent to
$u_1=A(r)\cdot u$, $A(r)u_1=u$. We conclude that $(w',w)$ belongs to $T_{L(r)}\mathcal{L}$ if and only if $A^{-1}(r)=A(r)$, which is equivalent to $A(r)^2=I_{2n}$.

\smallskip\noindent{\bf (3) $\Longleftrightarrow $ (4)}.
It is clear that $A(r)$ similar to $K_{2n}$ implies $A(r)^2=I_{2n}$. Assume now that $A(r)^2=I_{2n}$.
Then the eigenvalues of $A(r)$ are $\pm 1$. Since $(A(r)-I_{2n})\cdot (A(r)+I_{2n})=0$, the minimal polynomial of $A(r)$ contains only linear factors. Hence $A(r)$, and from
equation \eqref{defineB} also $B(r)$, are diagonalizable.

Since $B(r)\in sp(2n)$, $J_{2n}\cdot B(r)=-B^t(r)\cdot J_{2n}$. Take $u$ an eigenvector associated with the eigenvalue $-1$. Then
$B(r)^t\cdot J_{2n}u=J_{2n}u$ and so $J_{2n}u$ is an eigenvector associated with the eigenvalue $+1$. We conclude that the dimensions of the $-1$ and $1$ eigenspaces are equal.
Hence $B(r)$, and thus also $A(r)$, are equivalent to $K_{2n}$.
\end{proof}

The main step in the proof of theorem \ref{thm:EDS1} is the following:

\begin{proposition}\label{prop:Center-chordInv}
Consider an immersion  $\phi:U\subset\R^{2n}\to\R^{2n+1}$ transversal to $(0,1)$ such that the matrix $A(r)$ is equivalent to $K_{2n}$, for any $r\in U$. Then we can realize $\phi$
as a center-chord IAS.
\end{proposition}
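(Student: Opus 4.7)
The plan is to reverse-engineer the data $\beta, \gamma$ directly from the eigenstructure of $A$. Since $A(r)$ is similar to $K_{2n}$ at every $r$, we have $A(r)^2 = I_{2n}$ everywhere and the $(\pm 1)$-eigenspaces $E^{\pm}(r) \subset T_r U$ both have rank $n$, giving a smooth splitting $TU = E^- \oplus E^+$. The identity $A^2 = I_{2n}$ is $\nabla$-parallel, so Proposition \ref{prop:aparallel} yields $\hat{\nabla} A = 0$, which immediately implies that the subbundles $E^{\pm}$ are preserved by $\hat{\nabla}$, i.e., parallel distributions.

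The next step is to pass to adapted coordinates. Since $\hat{\nabla} = \frac{1}{2}(\nabla + \overline{\nabla})$ is torsion-free, any parallel distribution is involutive, so by Frobenius both $E^+$ and $E^-$ are integrable. As complementary integrable distributions, they admit, around every point, local coordinates $(s_1,\dots,s_n,t_1,\dots,t_n) \in U_1 \times U_2$ for which $\partial_{s_i} \in \Gamma(E^-)$ and $\partial_{t_j} \in \Gamma(E^+)$. In such coordinates, $Dy = Dx \cdot A$ specialises to $y_{s_i} = -x_{s_i}$ and $y_{t_j} = x_{t_j}$, so $\beta := x - y$ depends only on $s$ and $\gamma := x + y$ depends only on $t$. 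Setting $\Lambda_1 = \beta(U_1)$ and $\Lambda_2 = \gamma(U_2)$, the identities $x = \frac{1}{2}(\beta+\gamma)$ and $y = \frac{1}{2}(\gamma - \beta)$ are built in, and $\beta, \gamma$ are immersions because $Dx$ is injective.

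The crucial step is to verify that $\Lambda_1$ and $\Lambda_2$ are Lagrangian. I would exploit the incompatibility between the symmetry of $h$ and the skew-symmetry of $\omega$: for $u, v \in E^+$, the relation $Av = v$ gives $h(u,v) = \omega(Dx \cdot u, Dx \cdot v)$; combining $h(u,v) = h(v,u)$ with the skew-symmetry of $\omega$ then forces $h$ to vanish on $E^+ \times E^+$, and the same argument gives $h|_{E^- \times E^-} = 0$. Specialising to coordinate vectors yields $\omega(x_{t_i}, x_{t_j}) = 0 = \omega(x_{s_i}, x_{s_j})$, and since $\beta_{s_i} = 2 x_{s_i}$ and $\gamma_{t_j} = 2 x_{t_j}$, both immersions are indeed Lagrangian. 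The transversality hypothesis of Theorem \ref{ccias} is satisfied because $T_{\beta(s)}\Lambda_1 = Dx(E^-)$ and $T_{\gamma(t)}\Lambda_2 = Dx(E^+)$ intersect trivially. Consequently the center-chord IAS attached to $(\Lambda_1,\Lambda_2)$ has the same center and half-chord as $\phi$, hence coincides with $\phi$ up to an additive constant in the last coordinate. I expect this Lagrangian verification to be the main point of the argument, as the other steps are essentially dictated by linear algebra and Frobenius.
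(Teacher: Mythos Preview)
Your proof is correct, and the Lagrangian verification---playing the symmetry of $h$ against the skew-symmetry of $\omega$ on each eigenspace---is precisely the argument the paper uses. The difference lies in how you establish that $x\pm y$ each depend on only half the variables.

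The paper bypasses Frobenius entirely. It works on the target side with $B(r)=Dx\cdot A\cdot Dx^{-1}$, defines $p_1(x)=x+y(x)$ and $p_2(x)=x-y(x)$ directly as maps $x(U)\to\R^{2n}$, and simply computes $Dp_i=I\pm B$. Since $B$ is similar to $K_{2n}$, each $Dp_i$ has constant rank $n$; the constant-rank theorem then produces $n$-dimensional images $\beta$, $\gamma$ whose tangent spaces are the $(\pm 1)$-eigenspaces of $B$, and the Lagrangian check proceeds as you have it. No connection, no parallelism, no integrability argument is needed.

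Your route---invoking Proposition~\ref{prop:aparallel} to get $\hat\nabla A=0$, deducing that the eigenbundles $E^\pm$ are $\hat\nabla$-parallel and hence involutive (as $\hat\nabla$ is torsion-free), and then applying Frobenius to obtain adapted coordinates $(s,t)$---is more structural. It has the advantage of producing the parametrisation of Theorem~\ref{ccias} explicitly and of making visible the role of the metric connection, but it imports machinery that the paper's argument avoids. Both approaches are valid; the paper simply chooses the more elementary one.
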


\begin{proof}
The matrix $B(r)$ is similar to $K_{2n}$.
Denote by $E_1$ the $-1$-eingenspace and by $E_2$ the $1$-eingenspace.
Let $p_1:x(U)\to\R^{2n}$ be defined as $p_1(x)=x+y(x)$. Then, for any $v_1\in E_1$, $v_2\in E_2$,
$$
Dp_1(x)v_1=v_1+Dy(x)v_1=0;\ \, Dp_1(x)v=v_2+Dy(x)v_2=2v_2.
$$
Thus $p_1$ has rank $n$ at all points. Denoting $\beta=p_1(U)$, observe that the tangent space to $\beta$ at $p_1(x)$ is exactly $E_1$.
For $v_1,w_1\in E_1$,
$$
\omega(v_1,w_1)=-\omega(v_1,K_{2n}w_1)=-\omega(w_1,K_{2n}v_1)=\omega(w_1,v_1).
$$
Thus $\omega(v_1,w_1)=0$ and we conclude that $\beta$ is Lagrangian.
Now consider $p_2:x(U)\to\R^{2n}$ be defined as $p_2(x)=x-y(x)$. As above, $p_2$ has rank $n$ and the tangent space to $\gamma=p_2(U)$
at $p_2(x)$ equals $E_2$. Moreover, $\gamma$ is Lagrangian.
\end{proof}

Now we can prove theorem \ref{thm:EDS1}. If $\phi$ is an immersion such that $\Omega^*L=\Omega_1^*L=0$, then lemma \ref{lemma:Center-chordEq} implies that $A(r)$ is equivalent to
$K_{2n}$, for any $r\in U$. Now proposition \ref{prop:Center-chordInv} implies that $\phi$ can be realized an a center-chord IAS.

\begin{remark} In case $n=1$, any improper affine sphere $\phi:U\subset\R^2\to\R^3$ with indefinite metric necessarily satisfies $A^2(r)=I_{2n}$, for any $r\in U$. Thus, by proposition \ref{prop:Center-chordInv},  $\phi$ can be realized as
a center-chord IAS. In this case, the coordinates $(s,t)$ are called asymptotic (\cite{Craizer11},\cite{Milan13}).
\end{remark}

\subsection{Special IAS as solutions of an EDS }

Consider the complex structure  $\mathcal{J}_{4n}:\R^{4n}\to\R^{4n}$ defined by
$$
\mathcal{J}_{4n}(v_1,v_2)=\left( v_2, -v_1   \right).
$$
The symplectic form $\Omega_2$ defined by \ref{eq:Omega2} is also given by
$$
\Omega_2 \left( v, w    \right)=\Omega\left( v,  \mathcal{J}_{4n} w    \right).
$$
Consider the Exterior Differential System ${\mathcal E}_2=\{\Omega,\Omega_2\}$.
The main result of this section is the following:

\begin{theorem}\label{thm:EDS2}
The solutions of the EDS ${\mathcal E}_2$ are exactly the special IAS.
\end{theorem}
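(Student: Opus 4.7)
My plan is to mirror the proof of Theorem~\ref{thm:EDS1} exactly, replacing the involution $\mathcal{K}_{4n}$ by the complex structure $\mathcal{J}_{4n}$ at every step. The forward direction is automatic: any special IAS has $A(r)=J_{2n}$ at every point, so $L^{\ast}\Omega=0$ by Proposition~\ref{LagHam} and $L^{\ast}\Omega_2=0$ follows from $\Omega_2(\cdot,\cdot)=\Omega(\cdot,\mathcal{J}_{4n}\cdot)$ together with $J_{2n}^2=-I_{2n}$. The real content is the converse.

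First I would prove the direct analog of Lemma~\ref{lemma:Center-chordEq}: for any $\Omega$-Lagrangian immersion $L$ with image $\mathcal{L}$, the conditions (i)~$\mathcal{L}$ is $\Omega_2$-Lagrangian, (ii)~$\mathcal{L}$ is $\mathcal{J}_{4n}$-invariant, (iii)~$A(r)^2=-I_{2n}$ for every $r\in U$, and (iv)~$A(r)$ is similar to $J_{2n}$ for every $r\in U$, are equivalent. The implications (i)$\Leftrightarrow$(ii)$\Leftrightarrow$(iii) are reproduced almost verbatim from Lemma~\ref{lemma:Center-chordEq}, substituting $\mathcal{J}_{4n}$ for $\mathcal{K}_{4n}$ and using $\mathcal{J}_{4n}^2=-I_{4n}$. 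For (iii)$\Rightarrow$(iv), $A^2=-I_{2n}$ forces the eigenvalues of $A$ over $\C$ to be $\pm i$; since $A$ is real, the $+i$ and $-i$ eigenspaces are complex conjugates, hence of equal complex dimension $n$, yielding similarity of $A$ over $\R$ to $J_{2n}$.

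Next I would prove the analog of Proposition~\ref{prop:Center-chordInv}: any immersion $\phi$ for which $A(r)$ is pointwise similar to $J_{2n}$ is realizable as a special IAS. Following the center-chord construction, introduce the complex-valued maps $p_{\pm}(x)=x\pm i\,y(x)$ from $x(U)\subset\R^{2n}$ into $\C^{2n}$, with $p_{-}=\overline{p_{+}}$. Using $B(r)^2=-I$ and $B(r)\in sp(2n)$, I would show that $\Lambda_{+}:=p_{+}(x(U))$ is a complex Lagrangian submanifold of $\C^{2n}$ endowed with the complex-linear extension of $\omega$: its tangent space $(I+iB)\R^{2n}$ satisfies $J\,(I+iB)v=(I+iB)(-Bv)$ and is therefore $J$-invariant; the $sp(2n)$ identities $\omega(Bv,w)+\omega(v,Bw)=0$ and $\omega(Bv,Bw)=-\omega(v,B^2w)=\omega(v,w)$ then force the complexified $\omega$ to vanish on pairs of tangent vectors; and $\Lambda_{-}=\overline{\Lambda_{+}}$. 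The transversality hypothesis on $\phi$ ensures that $\Lambda_{+}$ projects regularly onto some complex $n$-dimensional coordinate subspace of $\C^{2n}$, so the standard holomorphic generating function theorem produces a holomorphic $H:U_{\C}\subset\C^n\to\C$ whose imaginary part is exactly the function $Q$ appearing in Section~3.2. The rotated center-chord description of special IAS (the unnumbered Proposition just before Remark~3.5) then identifies $\phi$ with the special IAS built from this $H$.

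The main obstacle to anticipate is verifying that $\Lambda_{+}$ is a genuine complex submanifold rather than merely an almost complex one. This is resolved painlessly by observing that $\Lambda_{+}$ is embedded inside the integrable complex manifold $(\C^{2n},J)$, where $J$-invariance of tangent spaces of a real embedded submanifold automatically promotes it to a complex submanifold, so no appeal to Newlander--Nirenberg is required on $U$ itself. With that settled, the extraction of the holomorphic generating function is routine complex symplectic geometry, and combining it with the rotated center-chord picture closes the converse direction and finishes the theorem.
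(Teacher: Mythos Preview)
Your proposal is correct and follows essentially the same route as the paper: the paper states Lemma~\ref{lemma:SpecialEq} (your four-equivalence) with proof ``Similar to lemma~\ref{lemma:Center-chordEq}'' and Proposition~\ref{prop:SpecialCS} (your realizability step) with proof ``Analogous to proposition~\ref{prop:Center-chordInv} using the complex variables $(z,\zb)$'', then combines them exactly as you do. You have simply unpacked what the paper leaves implicit---in particular your argument for (iii)$\Rightarrow$(iv) via complex-conjugate eigenspaces and your verification that $\Lambda_+$ is a complex Lagrangian submanifold are the details hidden behind the paper's one-line references.
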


\begin{lemma}\label{lemma:SpecialEq}
Consider a $\Omega$-Lagrangian immersion $L$. The following statements are equivalent:
\begin{enumerate}
\item
$\mathcal{L}$ is $\Omega_2$-Lagrangian, for any $r\in U$.

\item
$\mathcal{L}$ is $\mathcal{J}_{4n}$-invariant, for any $r\in U$.

\item
$A(r)^2=-I_{2n}$, for any $r\in U$.

\item
$A(r)$ is equivalent to $J_{2n}$, for any $r\in U$.

\end{enumerate}
\end{lemma}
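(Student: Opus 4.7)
The proof plan is to mirror the structure of Lemma \ref{lemma:Center-chordEq} throughout, replacing the involution $\mathcal{K}_{4n}$ with the complex structure $\mathcal{J}_{4n}$ and $\Omega_1$ with $\Omega_2$. The key algebraic fact that enables the parallel proof is the identity $\Omega_2(v,w) = \Omega(v, \mathcal{J}_{4n} w)$ stated just above the lemma, which is the exact analog of $\Omega_1(v,w) = \Omega(v, \mathcal{K}_{4n} w)$.

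For (1) $\Longleftrightarrow$ (2), I would repeat the coisotropic-extension trick verbatim. Fixing $w_0 \in T_{L(r)}\mathcal{L}$ and taking arbitrary $w_1, w_2 \in T_{L(r)}\mathcal{L}$, a direct expansion gives
$$\Omega(w_1 + \lambda \mathcal{J}_{4n} w_0,\, w_2 + \mu \mathcal{J}_{4n} w_0) = \mu\, \Omega_2(w_1, w_0) - \lambda\, \Omega_2(w_2, w_0),$$
where the $\Omega$-Lagrangian hypothesis kills $\Omega(w_1,w_2)$ and antisymmetry kills the $\lambda\mu$ term. Under (1) this vanishes, so $\mathrm{span}\{T_{L(r)}\mathcal{L}, \mathcal{J}_{4n} w_0\}$ is $\Omega$-isotropic, forcing $\mathcal{J}_{4n} w_0 \in T_{L(r)}\mathcal{L}$ by maximality of the Lagrangian. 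The converse (2) $\Longrightarrow$ (1) is immediate: under $\mathcal{J}_{4n}$-invariance, $\Omega_2(v,w) = \Omega(v, \mathcal{J}_{4n} w) = 0$ for $v,w \in T_{L(r)}\mathcal{L}$.

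For (2) $\Longleftrightarrow$ (3), a tangent vector at $L(r)$ has the form $(Dx(r)u,\, Dx(r) A(r) u)$. Applying $\mathcal{J}_{4n}$ produces $(Dx(r) A(r) u,\, -Dx(r) u)$, which is tangent iff there exists $u_1$ with $u_1 = A(r)u$ and $A(r)u_1 = -u$. Combining these forces $A(r)^2 u = -u$, and since $u$ is arbitrary this is equivalent to $A(r)^2 = -I_{2n}$. Finally, for (3) $\Longleftrightarrow$ (4), (4) $\Rightarrow$ (3) is immediate from $J_{2n}^2 = -I_{2n}$; and for the converse, the polynomial $x^2+1$ is irreducible over $\mathbb{R}$, so $A(r)^2 = -I_{2n}$ forces the minimal polynomial of $A(r)$ to be $x^2+1$. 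Its rational canonical form over $\mathbb{R}$ is then a direct sum of $n$ copies of the companion matrix of $x^2+1$, which is real-similar to $J_{2n}$.

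I expect no real obstacle, since everything is parallel to the center-chord case. The only notable contrast is in step (3) $\Longrightarrow$ (4): in Lemma \ref{lemma:Center-chordEq} the condition $A^2 = I_{2n}$ alone does not pin down the similarity class (it does not balance the $\pm 1$-eigenspace dimensions), so that proof had to invoke $B(r) \in sp(2n)$ and use $J_{2n} B(r) = -B(r)^t J_{2n}$. In the present case, the irreducibility of $x^2+1$ over $\mathbb{R}$ dispenses with any such auxiliary symplectic input, so the argument is actually shorter.
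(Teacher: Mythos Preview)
Your proposal is correct and is precisely what the paper intends: its proof reads in full ``Similar to lemma \ref{lemma:Center-chordEq},'' and you have carried out that parallel argument accurately, including the nice observation that step (3) $\Rightarrow$ (4) is actually simpler here because $x^2+1$ is irreducible over $\R$, so no appeal to $B(r)\in sp(2n)$ is needed to balance eigenspace dimensions.
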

\begin{proof}
Similar to lemma \ref{lemma:Center-chordEq}.
\end{proof}

\begin{proposition}\label{prop:SpecialCS}
Consider an immersion  $\phi:U\subset\R^{2n}\to\R^{2n+1}$ transversal to $(0,1)$ such that  $A(r)$ is equivalent to $J_{2n}$, for any $r\in U$. Then $\phi$
can be realized as a special IAS.
\end{proposition}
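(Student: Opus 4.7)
The plan is to parallel the proof of Proposition \ref{prop:Center-chordInv}, replacing the involution $\mathcal{K}_{4n}$ and its $\pm 1$ eigenspaces (which there yielded the real Lagrangians $\beta,\gamma$) by the complex structure $\mathcal{J}_{4n}$ and the $\pm i$ eigenspaces of $A$, which here will yield a pair of complex conjugate Lagrangians. Working in $x$-coordinates on $V=x(U)\subset\R^{2n}$, I view $y$ as a function of $x$ with $A(x)=Dy(x)$ satisfying $A^2=-I_{2n}$, so that $A$ is an almost complex structure on $V$. The analogs of $p_1=x+y$ and $p_2=x-y$ will be
\[
\eta,\zeta: V\to\C^{2n},\qquad \eta(x)=x+iy(x),\quad \zeta(x)=x-iy(x)=\overline{\eta(x)}.
\]

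The central step will be to show that $\eta(V)$ and $\zeta(V)$ are complex Lagrangian submanifolds of $(\C^{2n},\omega^{\C})$, where $\omega^{\C}$ is the complex-bilinear extension of $\omega$. Decomposing $T_xV\otimes\C=E_{+i}\oplus E_{-i}$ into the $A$-eigenspaces, $D\eta=I_{2n}+iA$ vanishes on $E_{+i}$ and equals $2\,I_{2n}$ on $E_{-i}$; since no nonzero real vector lies in $E_{-i}$, this makes $\eta$ a real immersion of rank $2n$. The identity $D\eta(-Au)=iD\eta(u)$, an immediate consequence of $A^2=-I$, then shows $T_{\eta(x)}\eta(V)$ is $i$-invariant in $\C^{2n}$, hence $\eta(V)$ is a complex $n$-submanifold; in particular this forces the almost complex structure $A$ on $V$ to be \emph{integrable}. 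For the Lagrangian property I combine $A^2=-I$ with $A\in\mathfrak{sp}(2n,\R)$---automatic since the Hamiltonian origin $y=Y_F$ gives $L^*\Omega=0$, which is equivalent to $A\in\mathfrak{sp}$---to obtain
\[
\omega^{\C}(D\eta\,u,D\eta\,v)=\omega(u,v)+i\bigl(\omega(Au,v)+\omega(u,Av)\bigr)-\omega(Au,Av)=0,
\]
the bracketed term vanishing by $A\in\mathfrak{sp}$ and $\omega(Au,Av)=\omega(u,v)$ via $A^2=-I$; conjugation gives the same statement for $\zeta$.

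Finally, the standard generating-function description of a complex Lagrangian submanifold of $(\C^{2n},\omega^{\C})$ gives, after a linear reordering of coordinates on $\R^{2n}$ arranging that the projection onto the first $n$ complex coordinates of $\eta$ is regular, a holomorphic $H:\C^n\to\C$ with $\eta(V)=\{(z,\partial H/\partial z):z\in\C^n\}$. Writing $z=s+it$ and $H=\tilde P+i\tilde Q$, the Cauchy--Riemann equations yield $\partial H/\partial z=\partial Q/\partial t+i\,\partial Q/\partial s$; taking real and imaginary parts of $\eta(V)=\{(z,\partial H/\partial z)\}$ reproduces exactly the formulas $x=(s,\partial Q/\partial t)$, $y=(t,\partial Q/\partial s)$, and hence also $f=Q-\sum_k t_k\,\partial Q/\partial t_k$, of the preceding subsection, realizing $\phi$ as the special IAS associated with $H$. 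The main obstacle is establishing that $\eta(V)$ is a \emph{complex} (not merely real) submanifold of $\C^{2n}$, equivalently the integrability of $A$; this is precisely what is forced by the combination $A^2=-I_{2n}$ and $L^*\Omega=0$, after which the Lagrangian generating function recovers the holomorphic data and the verification that $\phi$ comes from $H$ is a routine computation.
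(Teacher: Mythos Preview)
Your proof is correct and follows essentially the same approach the paper indicates (the paper's own proof is the single line ``Analogous to Proposition~\ref{prop:Center-chordInv} using the complex variables $(z,\zb)$''): you complexify, form $\eta=x+iy$ and $\zeta=\bar\eta$, and show their images are complex conjugate Lagrangian submanifolds of $(\C^{2n},\omega^{\C})$, exactly paralleling the $p_1,p_2$ construction in the real case; this matches the paper's description of special IAS via equations~(3.4)--(3.5). Two minor imprecisions worth tightening: (i) in your rank argument the phrase should read ``no nonzero real vector lies in $E_{+i}$'' (the kernel of $I+iA$), not $E_{-i}$; (ii) a literal ``reordering'' of the real coordinates need not suffice to make the projection onto the first $n$ complex coordinates regular---what you need is a real \emph{symplectic} linear change, specifically one of the standard partial swaps $x_j\mapsto x_{j+n},\ x_{j+n}\mapsto -x_j$ for $j$ in a suitable subset of $\{1,\dots,n\}$, which induces the corresponding complex swap $\eta_j\leftrightarrow\eta_{j+n}$ (up to sign) and puts the complex Lagrangian tangent space in graph position by the usual Lagrangian-Grassmannian argument.
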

\begin{proof}
Analogous to proposition \ref{prop:Center-chordInv} using the complex variables $(z,\zb)$.
\end{proof}

Now we can prove theorem \ref{thm:EDS2}. If $\phi$ is an immersion such that $\Omega^*L=\Omega_2^*L=0$, then proposition \ref{lemma:SpecialEq} implies that $A(r)$ is equivalent to $K_{2n}$, for any $r\in U$. Now
proposition \ref{prop:SpecialCS} implies that $\phi$ can be realized an a special IAS.

\begin{remark}
In case $n=1$, any improper affine sphere $\phi:U\subset\R^2\to\R^3$ with definite metric necessarily satisfies $A^2(r)=-I_{2n}$, for any $r\in U$. Thus, by proposition \ref{prop:SpecialCS},  $\phi$ can be realized as
a special IAS. In this case, the coordinates $(s,t)$ are called isothermal
(\cite{Galvez07},\cite{Craizer12},\cite{Martinez05}).
\end{remark}

\section{Lagrangian and Legendrian stable singularities of IAS}\label{singularsection}

Consider a Lagrangian immersion $L:\mathbb R^{2n}\rightarrow (T\mathbb R^{2n},\Omega)$ and a Legendrian immersion $\tilde L:\mathbb R^{2n}\rightarrow (T\mathbb R^{2n}\times \mathbb R,\{\theta=0\})$. Denote by
$\pi:T\mathbb R^{2n}=\R^{2n}\times\R^{2n}\to\R^{2n}$ the projection $\pi(x,y)=x$ and by $\tilde \pi:T\R^{2n}\times\R=\R^{2n}\times\R^{2n}\times\R\to\R^{2n}\times\R$, the projection $\tilde \pi(x,y,z)=(x,z)$.

In this section we shall consider  the singularities of the Lagrangian map $\pi \circ L$ and the Legendrian map $\tilde \pi\circ \tilde L$.

We use the following notation:   $x=(x^{(1)},x^{(2)})=({x^{(1)}}, \hat{x}, \check{x})$, $x^{(1)}=(x_1,...,x_n)$, $\hat{x}=(x_{n+1},...,x_{n+m})$, $\check{x}=(x_{n+m+1},...,x_{2n})$ and
$y=(y^{(1)},y^{(2)})=(\hat{y},\check{y},{y^{(2)}})$, $\hat{y}=(y_{1},...,y_{m})$, $\check{y}=(y_{m+1},...,y_{n})$, $y^{(2)}=(y_{n+1},...,y_{2n})$ for $0\leq m\leq n-1$.

Let us recall that in this notation
\begin{equation}
\Omega=dx^{(1)}\wedge dy^{(2)} +d\hat{y}\wedge d\hat{x}+d\check{y}\wedge d\check{x},
\end{equation}
\begin{equation}
\theta=dz-y^{(2)}dx^{(1)} +\hat{y}d\hat{x}+\check{y}d\check{x}.
\end{equation}

\subsection{ Generating functions and generating families}

The main tool used for classifying these singularities are the generating functions and generating families.

Denote by ${\mathcal L}$ the image of the Lagrangian immersion $L$ and by ${\tilde {\mathcal L}}$ the image of the Legendrian immersion $\tilde L$.

A generating function of the Lagrangian submanifold $\mathcal L$ and the Legendrian submanifold $\tilde {\mathcal L}$ is a function
$$S:\R^{n+m}\times\R^{n-m}\ni ({x^{(1)}},\hat{x},\check{y})\mapsto S({x^{(1)}},\hat{x},\check{y})\in\R,$$ satisfying
\begin{equation}\label{eq:defineGF1}
\mathcal L=\{ (x,y):   \frac{\partial S}{\partial {x^{(1)}}}={y^{(2)}}, \frac{\partial S}{\partial \check{y}}=\check{x}, -\frac{\partial S}{\partial \hat{x}}=\hat{y}  \}.
\end{equation}
and
\begin{equation}\label{eq:defineGF2}
\tilde {\mathcal L}=\{ (x,y,z): \frac{\partial S}{\partial {x^{(1)}}}={y^{(2)}}, \frac{\partial S}{\partial \check{y}}=\check{x}, -\frac{\partial S}{\partial \hat{x}}=\hat{y} , z=S({x^{(1)}},\hat{x},\check{y})-\check{y}\cdot\check{x} \}.
\end{equation}

A generating family of the Lagrangian map $\pi \circ L$ and the Legendrian map $\tilde \pi \circ \tilde L$ is a function
$$G:\R^{2n}\times\R^{n-m}\ni ({x^{(1)}},\hat{x},\check{x},\kappa) \mapsto G({x^{(1)}},\hat{x},\check{x},\kappa)\in \R,$$ satisfying
$$
\mathcal L=\{ (x,y): \exists \kappa : \frac{\partial G}{\partial \kappa}=0,  \frac{\partial G}{\partial {x^{(1)}}}={y^{(2)}}, -\frac{\partial G}{\partial \check{x}}=\check{y} \}.
$$
and
$$
\tilde{\mathcal L}=\{ (x,y,z): \exists \kappa : \frac{\partial G}{\partial \kappa}=0,  \frac{\partial G}{\partial {x^{(1)}}}={y^{(2)}}, -\frac{\partial G}{\partial \check{x}}=\check{y}, z=G({x^{(1)}},\hat{x},\check{x},\kappa) \}.
$$

A generating family can be obtained from a generating function by the formula
\begin{equation}\label{gen-fam-gen-fun}
G({x^{(1)}},\hat{x},\check{x},\kappa)=S({x^{(1)}},\hat{x},\kappa)- \check{x}\cdot\kappa.
\end{equation}

We shall use the following well-known theorem (\cite{Arnold}, Chapter 21):

\begin{theorem}\label{thm-Arnold}
The Lagrangian map-germ $\pi \circ L$ at $0$ generating by $G$ is Lagrangian stable if and only if the classes of function-germs
$$1,\frac{\partial G}{\partial x^{(1)}}(0,0,0,\kappa), \frac{\partial G}{\partial \hat{x}}(0,0,0,\kappa),
\frac{\partial G}{\partial \check{x}}(0,0,0,\kappa)$$
generate the linear space $\mathbb R[[\kappa]]/<\frac{\partial G}{\partial \kappa}(0,0,0,\kappa)>$.

The Legendrian map-germ $\tilde \pi \circ \tilde L$ at $0$ generating by $G$ is Legendrian stable if and only if the classes of function-germs
$$1,\frac{\partial G}{\partial x^{(1)}}(0,0,0,\kappa), \frac{\partial G}{\partial \hat{x}}(0,0,0,\kappa),
\frac{\partial G}{\partial \check{x}}(0,0,0,\kappa)$$
generate the linear space $\mathbb R[[\kappa]] / <\frac{\partial G}{\partial \kappa}(0,0,0,\kappa), G(0,0,0,\kappa)>$.
\end{theorem}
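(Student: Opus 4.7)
\textbf{Proof proposal for Theorem \ref{thm-Arnold}.} This is Arnold's classical criterion, so my plan is to reduce it to the standard infinitesimal versality criterion for deformations of function-germs.

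First I would view $G({x^{(1)}},\hat{x},\check{x},\kappa)$ as a $2n$-parameter deformation of the function-germ $g(\kappa):=G(0,0,0,\kappa)$, where the parameters are $x=({x^{(1)}},\hat{x},\check{x})$. Under this viewpoint, the defining formulas for $\mathcal L$ and $\tilde{\mathcal L}$ display the Lagrangian map $\pi\circ L$ (resp.\ the Legendrian map $\tilde\pi\circ\tilde L$) as the restriction to the critical set $\{\partial G/\partial\kappa=0\}$ of the projection onto parameter space $\R^{2n}$ (resp.\ $\R^{2n}\times\R$, once the height $z=G$ is included).

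Next I would invoke the fundamental equivalence of Arnold's theory: $\pi\circ L$ is Lagrangian stable at $0$ if and only if $G$ is an $\mathcal R^+$-versal deformation of $g$, and $\tilde\pi\circ\tilde L$ is Legendrian stable at $0$ if and only if $G$ is a $\mathcal K$-versal deformation of $g$. Here $\mathcal R^+$ is the pseudo-group of right changes of $\kappa$-coordinates acting on function-germs up to additive constants, while $\mathcal K$ is the contact pseudo-group. This passage, which matches Lagrangian (resp.\ Legendrian) equivalence of map-germs with stable $\mathcal R^+$-equivalence (resp.\ $\mathcal K$-equivalence) of generating families, is the principal non-trivial ingredient and is established via the Malgrange preparation theorem.

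Finally I would apply the infinitesimal versality criterion. The tangent space at $g$ to its $\mathcal R^+$-orbit in $\R[[\kappa]]$ equals $\langle\partial g/\partial\kappa\rangle+\R\cdot 1$, while the tangent space to its $\mathcal K$-orbit is the ideal $\langle\partial g/\partial\kappa,\, g\rangle$. Versality amounts to the initial velocities $\partial G/\partial {x^{(1)}}|_{x=0}$, $\partial G/\partial\hat{x}|_{x=0}$, $\partial G/\partial\check{x}|_{x=0}$, together with the constant $1$ (to absorb the additive-constant direction), spanning a complement of the orbit tangent space in $\R[[\kappa]]$. Equivalently, their classes generate the corresponding quotients, yielding the two criteria in the statement. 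The main obstacle is the equivalence invoked in the previous paragraph between geometric stability and deformation-theoretic versality; given the scope of the present paper, I would cite this from Arnold's Chapter 21 rather than reprove it.
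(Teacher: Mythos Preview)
Your outline is correct and is in fact the standard derivation of Arnold's versality criterion; note however that the paper does not prove this theorem at all but simply quotes it as a well-known result from \cite{Arnold}, Chapter 21. Since you likewise conclude by proposing to cite that reference for the key equivalence between Lagrangian/Legendrian stability and $\mathcal R^+$/$\mathcal K$-versality of the generating family, your treatment is entirely consistent with the paper's, only more explicit about what is being invoked.
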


\subsection{Generating functions of center-chord and special IAS}

Consider a center-chord IAS and assume that the Lagrangian submanifolds are given by
$(u, dS^{-}(u))$ and $(v,dS^{+}(v))$.
Then
\begin{eqnarray}
(x^{(1)},x^{(2)})&=&\frac{1}{2}\left( u+v, dS^{+}(v)+dS^{-}(u)\right)\\ (y^{(1)},y^{(2)})&=&\frac{1}{2}\left(v-u,dS^{+}(v)-dS^{-}(u)\right).
\end{eqnarray}
straightforward calculations show that
\begin{equation}\label{ccS_0}
S_0({x^{(1)}},{y^{(1)}})=\frac{1}{2} \left(  S^{+}({x^{(1)}}+{y^{(1)}})-S^{-}({x^{(1)}}-{y^{(1)}})             \right)
\end{equation}
satisfies equations \eqref{eq:defineGF1}-\eqref{eq:defineGF2} and thus is a generating function for $L$ and ${\tilde L}$.

A special IAS is defined by
\begin{equation}
(x^{(1)},x^{(2)})=(s, \frac{\partial Q}{\partial t})  ;\ \ (y^{(1)},y^{(2)})=(t, \frac{\partial Q}{\partial s}),
\end{equation}
where $ Q$ is the imaginary part of a holomorphic function $H$ (see (\ref{holom-H})-(\ref{hol2})).
Thus
\begin{equation}\label{eq:S_0}
S_0(x^{(1)}, y^{(1)})= Q(x^{(1)}, y^{(1)})
\end{equation}
satisfies equations \eqref{eq:defineGF1} and \eqref{eq:defineGF2} and thus is a generating function.

For any $0\leq m\leq n-1$, it follows from equation \eqref{eq:defineGF1} that
the Lagrangian submanifold of the IAS is defined by the equations
\begin{equation}\label{eq:defineSpecialIAS1}
{y^{(2)}}=\frac{\partial S_0}{\partial {x^{(1)}}}, \ \ \hat{x}=\frac{\partial S_0}{\partial \hat{y}}, \ \  \check{x}=\frac{\partial S_0}{\partial \check{y}}.
\end{equation}
For the Legendrian submanifold, we must consider also
\begin{equation}\label{eq:defineSpecialIAS2}
z=S_0({x^{(1)}}, \hat{y},\check{y})-\hat{y}\cdot \hat{x}- \check{y}\cdot \check{x}.
\end{equation}
So the generating family has the following form
\begin{equation}\label{G_0}
G_0(x^{(1)},\hat{x},\check{x},\kappa)=S_0({x^{(1)}}, \hat{\kappa},\check{\kappa})-\hat{\kappa}\cdot \hat{x}- \check{\kappa}\cdot \check{x},
\end{equation}
where $\kappa=(\hat{\kappa},\check{\kappa})$, $\hat{\kappa}=(\kappa_{1},...,\kappa_{m})$, $\check{\kappa}=(\kappa_{m+1},...,\kappa_{n})$.

We can obtain other generating functions when $S_0$ (given by \eqref{ccS_0} or by \eqref{eq:S_0}) is quadratic in $\hat{y}$. With this assumption we can write
\begin{equation}\label{eq:defineVquadratic}
S_0({x^{(1)}},\hat{y},\check{y})=\sum_{k=1}^m\left(\frac{1}{2}y_k^2+y_kg_k({x^{(1)}},\check{y})\right) +h({x^{(1)}},\check{y}).
\end{equation}
Then from $\hat{x}=\frac{\partial S_0}{\partial \hat{y}}$ we obtain
\begin{equation}\label{y_k}
 y_k=x_{k+n}-g_k({x^{(1)}},\check{y})
\end{equation}
 for $k=1,\cdots,m$. Let $\hat{g}({x^{(1)}},\check{y})=(g_1({x^{(1)}},\check{y}),\cdots, g_m({x^{(1)}},\check{y}))$. We define a new generating function
\begin{equation}\label{S_m}
S_m({x^{(1)}},\hat{x},\check{y})=S_0({x^{(1)}},\hat{x}-\hat{g}({x^{(1)}},\check{y}),\check{y}) - \sum_{k=1}^m (x_{k+n}-g_k({x^{(1)}},\check{y}))x_{k+n},
\end{equation}
Thus
\begin{equation}\label{eq:Gfm}
S_m({x^{(1)}},\hat{x},\check{y})=-\frac{1}{2}\sum_{k=1}^m\left(x_{k+n}- g_k({x^{(1)}},\check{y}) \right)^2+h({x^{(1)}},\check{y}).
\end{equation}
Using equations \eqref{eq:defineSpecialIAS1} and \eqref{eq:defineSpecialIAS2}, it is straightforward to verify that $S_m$
satisfies \eqref{eq:defineGF1} and \eqref{eq:defineGF2} and thus is a generating function for the Lagrangian and Legendrian submanifolds. From \eqref{gen-fam-gen-fun} we obtain a new generating family
\begin{equation}\label{G_m}
G({x^{(1)}},\hat{x},\check{x},\check{\kappa})=S({x^{(1)}},\hat{x},\kappa)- \check{x}\cdot\check{\kappa}.
\end{equation}

\subsection{Realization of simple stable Legendrian singularities of center-chord IAS}

The singular set of a center-chord IAS has a simple geometrical meaning: The tangent planes of the Lagrangian submanifolds
 $(s, dS^{-}(s))$ and $(t, dS^{+}(t))$ at a singular pair $(s,t)$ must intersect in a $(n-m)$-dimensional vector space, with $n>m$.
 Thus the image of the singular set by the map $x$ is exactly their Wigner caustic.
Moreover, the image of the singular set by the one parameter family $x_{\lambda}$ are the equidistants of the pair of Lagrangian submanifolds.

The singularities of the equidistants and the Wigner caustic of a pair of Lagrangian submanifolds were studied in \cite{Domitrz13}, so
the problem of realization of simple singularities for center-chord IAS was solved there, where it is proved that
any simple stable Lagrangian singularity can be realized by a center-chord transform and, as
it is straightforward to adapt section $4$ of this paper to the Legendrian setting,
Theorem $4.1$ in \cite{Domitrz13} can be restated as follows:

\begin{theorem}
Any germ of a simple stable Legendrian singularity is realizable as a center-chord IAS.
\end{theorem}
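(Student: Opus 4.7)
The plan is to mirror the proof of Theorem 4.1 in \cite{Domitrz13}, working at the level of generating families and invoking the Legendrian half of Theorem \ref{thm-Arnold} in place of the Lagrangian half. The starting observation is that every center-chord IAS carries a generating family $G_m$ of the very specific form \eqref{G_m} built from \eqref{eq:Gfm}, which in turn is determined by the pair $(S^+, S^-)$ via the antisymmetric combination \eqref{ccS_0}. Realizing a prescribed Legendrian germ as a center-chord IAS thus reduces to finding $S^\pm$ for which $G_m$ is Legendrian-equivalent to the miniversal unfolding of that germ.

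I would then proceed case by case over the simple types $A_k$, $D_k$, $E_6$, $E_7$, $E_8$, in each case using the explicit pair $(S^+, S^-)$ produced in \cite{Domitrz13} for the realization of the corresponding Lagrangian singularity. That construction already provides, by the Lagrangian part of Theorem \ref{thm-Arnold}, that the classes of $1$, $\partial G_m/\partial x^{(1)}$, $\partial G_m/\partial \hat{x}$, $\partial G_m/\partial \check{x}$ at the origin span $\R[[\kappa]]/\langle \partial G_m/\partial \kappa\rangle$. To upgrade this to a Legendrian realization I must verify the stricter condition that these classes also span $\R[[\kappa]]/\langle \partial G_m/\partial \kappa,\, G_m(0,0,0,\kappa)\rangle$. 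For each ADE type this is a one-line algebraic check, because $G_m(0,0,0,\kappa)$ is a single function of $\kappa$ whose leading order is at least that of $\partial G_m/\partial \kappa$, so its inclusion in the ideal leaves the quotient unchanged at the level of the spanning set.

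The delicate point, which I expect to require the most care, is the rigidity of the combination $S_0 = \tfrac{1}{2}(S^+(x^{(1)}+y^{(1)}) - S^-(x^{(1)}-y^{(1)}))$: only antisymmetrized differences of one-sided functions of $x \pm y$ are admissible, so not every ADE normal form is manifestly of this shape. Showing that every simple Lagrangian normal form can nevertheless be brought to this form is precisely the content of \cite{Domitrz13}; transferring the result to the Legendrian category requires only the observation that the extra $z$-coordinate supplied by \eqref{eq:defineGF2}, namely $z = S_m({x^{(1)}}, \hat{x}, \check{y}) - \check{y}\cdot\check{x}$, can be shifted by a constant without changing the Legendrian equivalence class. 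With this, the argument of \cite{Domitrz13} lifts verbatim to the Legendrian setting and delivers the desired realization.
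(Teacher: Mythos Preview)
Your approach is correct and is in fact the route the paper itself acknowledges in the introduction (``Starting from \cite{Domitrz13}, one can easily verify that the Legendrian statement also holds for center-chord IAS''). The paper, however, deliberately chooses a different proof: it writes down entirely new explicit pairs $(S^+,S^-)$ for each ADE type, tailored so that each formula is the ``real'' counterpart of the holomorphic function $H$ used in the special IAS case (Theorem~\ref{specialsing}). What the paper's approach buys is a visible structural parallel between the center-chord and special realizations; your approach is shorter and more conceptual but does not exhibit that parallel.

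One refinement to your argument: the step you describe as a ``one-line algebraic check'' for each ADE type can be replaced by a single uniform observation. Every simple germ is quasi-homogeneous, so by the Euler relation $G(0,0,0,\kappa)$ already lies in the Jacobi ideal $\langle \partial G/\partial\kappa\rangle$. Hence the two quotients in Theorem~\ref{thm-Arnold} coincide, and Lagrangian stability is \emph{equivalent} to Legendrian stability for simple germs---no case-by-case verification is needed. Your phrasing in terms of ``leading order'' is correct for $A_k$ but does not obviously cover the multivariable $D_k$, $E_k$ cases; the quasi-homogeneity argument handles all of them at once. With this in place, your last paragraph about shifting the $z$-coordinate is unnecessary: once the generating family is Lagrangian stable and of the correct ADE type, the Legendrian realization is automatic.
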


We give below another proof of this theorem, by presenting new generating families that closely resemble the generating families that appear in the proof of Theorem \ref{specialsing}  for the special IAS, in the next section, thus re-emphasizing the similarities between these two kinds of IAS.

\begin{proof}
We explicitly describe pairs of functions $(S^{+},S^{-})$ such that the corresponding generating function $S_0$ (see \eqref{ccS_0}) or $S_m$ (see \eqref{S_m}) generates the simple stable Legendrian singularity of the type A-D-E.
Using formulas \eqref{G_0} and \eqref{G_m} and Theorem \ref{thm-Arnold} one can easily check that for the following pair of functions $(S^{+},S^{-})$ we obtain the following Legendrian singularities:

Denote by $\lfloor a \rfloor$ the greatest integer smaller than or equal to $a$.

For $A_k$ singularity with $k\le n+2$ we take
$$
S^{+}(v)=\pm (-1)^{  \lfloor \frac{k+1}{2} \rfloor } v_1^{k+1} + \sum_{j=2}^n v_j^2+\sum_{j=1}^{k-2} (-1)^{ \lfloor \frac{k-j+1}{2} \rfloor} v_jv_1^{k-j}
$$
$$
S^{-}(u)=\pm (-1)^{  \lfloor \frac{k}{2} \rfloor } u_1^{k+1} - \sum_{j=2}^n u_j^2+\sum_{j=1}^{k-2} (-1)^{ \lfloor \frac{k-j+2}{2} \rfloor} u_ju_1^{k-j}
$$
Then a generating function is $S_0$ (see \eqref{ccS_0}).

For $A_k$ singularity with $n+2<k< 2n+2$ we take
$$
S^{+}=(-1)^{\lfloor \frac{k+1}{2} \rfloor} v_1^{k+1}+\sum_{j=2}^{k-n-1}  (-1)^{\lfloor \frac{k-j}{2} \rfloor} v_jv_1^{k-j+1}+\sum_{j=1}^{n}  (-1)^{\lfloor \frac{n-j+2}{2} \rfloor} v_{j}v_1^{n-j+2}
$$
$$
+\frac{1}{2}\sum_{j=2}^{k-n-1}v_1^{2k-2j+2} +\frac{1}{2}\sum_{j=2}^{k-n-1}v_j^2 +\sum_{j=k-n}^{n}v_j^2.
$$
$$
S^{-}=(-1)^{\lfloor \frac{k}{2}\rfloor} u_1^{k+1}+\sum_{j=2}^{k-n-1} (-1)^{\lfloor \frac{k-j-1}{2} \rfloor} u_ju_1^{k-j+1}+\sum_{j=1}^{n}  (-1)^{\lfloor \frac{n-j+1}{2} \rfloor} u_{j}u_1^{n-j+2}
$$
$$
-\frac{1}{2}\sum_{j=2}^{k-n-1}u_1^{2k-2j+2} -\frac{1}{2}\sum_{j=2}^{k-n-1}u_j^2 -\sum_{j=k-n}^{n}u_j^2.
$$
Then $S_0$ (see \eqref{ccS_0}) is quadratic in $y_j=v_j-u_j$ for $j=2,\cdots,k-n-1$. So we can  use  \eqref{eq:defineVquadratic}-\eqref{eq:Gfm} to obtain a generating function $S_m$.

For $D_k^{\pm}$ with $4\le k\le n+3$ take
$$
S^{+}(v)=-v_1v_2^2-v_1^3-v_2v_1^3\pm (-1)^{ \lfloor \frac{k+3}{2} \rfloor}v_1^{k-1}+\sum_{j=3}^{k-3}(-1)^{ \lfloor \frac{j+2}{2} \rfloor}v_jv_1^{j+1}+\sum_{j=3}^{n} v_j^2.
$$
$$
S^{-}(u)=-u_1u_2^2+u_1^3-u_2u_1^3\pm (-1)^{ \lfloor \frac{k+2}{2} \rfloor}u_1^{k-1}+\sum_{j=3}^{k-3}(-1)^{ \lfloor \frac{j+3}{2} \rfloor}u_ju_1^{j+1}-\sum_{j=3}^{n} u_j^2.
$$
Then a generating function is $S_0$ (see \eqref{ccS_0}).

For $D_k$ with $n+3<k<2n+2$ take
$$
S^{+}=(-1)^{\lfloor \frac{k-1}{2} \rfloor}v_1^{k-1}-v_1v_2^2-v_1^3-v_2v_1^3+\sum_{j=3}^{n}(-1)^{\lfloor \frac{j+1}{2} \rfloor}v_jv_1^{j+1}
$$
$$
+\sum_{j=3}^{k-n-1}(-1)^{\lfloor \frac{j+n}{2} \rfloor}v_{j}v_1^{j+n-1}+\frac{1}{2}\sum_{j=3}^{k-n-1}(-1)^{j+n+1}v_1^{2j+2n-2} -\frac{1}{2}\sum_{j=3}^{k-n-1}v_j^2 -\sum_{j=k-n}^{n}v_j^2.
$$
$$
S^{-}=(-1)^{\lfloor \frac{k-2}{2}\rfloor}u_1^{k-1}-u_1u_2^2+u_1^3-u_2u_1^3+\sum_{j=3}^{n}(-1)^{\lfloor \frac{j}{2} \rfloor}u_ju_1^{j+1}
$$
$$
+\sum_{j=3}^{k-n-1}(-1)^{\lfloor \frac{j+n-1}{2}\rfloor}u_{j}u_1^{j+n-1}+\frac{1}{2}\sum_{j=3}^{k-n-1}(-1)^{j+n}u_1^{2j+2n-2} +\frac{1}{2}\sum_{j=3}^{k-n-1}u_j^2 +\sum_{j=k-n}^{n}u_j^2.
$$
Then $S_0$ (see \eqref{ccS_0}) is quadratic in $y_j=v_j-u_j$ for $j=3,\cdots,k-n-1$. So we can  use  \eqref{eq:defineVquadratic}-\eqref{eq:Gfm} to obtain a generating function $S_m$.

For $E_6^{\pm}$ and $n\ge 3$,  take
$$
S^{+}=-v_1^3 \pm v_2^4+v_3^2+v_1^2v_2^2+v_2^2v_1+v_3v_2^2+\sum_{j=4}^n v_j^2
$$
$$
S^{-}=-u_1^3 \mp u_2^4-u_3^2+u_1^2u_2^2-u_2^2u_1-u_3u_2^2-\sum_{j=4}^n u_j^2
$$
and take the generating function $S_0$ (see \eqref{ccS_0}).

For $E_7$, $n=3$, take
$$
S^{+}=-v_1^3+v_1v_2^3-\frac{1}{2}v_3^2+v_3v_2^4+v_2^4+v_1v_2^2+v_3v_1v_2+\frac{1}{2}v_2^8.
$$
$$
S^{-}=-u_1^3-u_1u_2^3+\frac{1}{2}u_3^2+u_3u_2^4+u_2^4-u_1u_2^2-u_3u_1u_2-\frac{1}{2}u_2^8.
$$
Then a generating function $S_0$(see \eqref{ccS_0}) is quadratic in $y_3=v_3-u_3$ and we can use \eqref{eq:defineVquadratic}-\eqref{eq:Gfm} to obtain a generating function $S_m$.

For $E_7$, $n\geq 4$, we take
$$
S^{+}=-v_1^3+v_1v_2^3-v_3^2-v_4^2+v_2^4v_1+v_2^4+v_3v_2^2+v_4v_1v_2+\sum_{j=5}^n  v_j^2
$$
$$
S^{-}=-u_1^3-u_1u_2^3+u_3^2+u_4^2-u_2^4u_1+u_2^4-u_3u_2^2-u_4u_1u_2-\sum_{j=5}^n  u_j^2
$$
to obtain a generating function $S_0$ (see \eqref{ccS_0}).

For $E_8$, $n=4$, we take
$$
S^{+}=-v_1^3+v_2^5-v_3^2-\frac{1}{2}v_4^2+v_1v_2^3v_4+v_2^4+v_1^2v_2^2+v_1v_2v_3+v_2^2v_4+\frac{1}{2}v_1^2v_2^6.
$$
$$
S^{-}=-u_1^3+u_2^5+u_3^2+\frac{1}{2}u_4^2+u_1u_2^3u_4+u_2^4+u_1^2u_2^2-u_1u_2u_3-u_2^2u_4-\frac{1}{2}u_1^2u_2^6.
$$
Then a generating function $S_0$ (see \eqref{ccS_0}) is quadratic in $y_4=v_4-u_4$ and we can  use  \eqref{eq:defineVquadratic}-\eqref{eq:Gfm} to obtain a generating function $S_m$.

For $E_8$, $n\geq 5$, we take
$$
S^{+}=-v_1^3+v_2^5-v_3^2-v_4^2-v_5^2+v_1^2v_2^3+v_2^4+v_3v_1v_2^2+v_4v_1v_2+v_5v_2^2+\sum_{j=6}^n v_j^2
$$
$$
S^{-}=-u_1^3+u_2^5+u_3^2+u_4^2+u_5^2-u_1^2u_2^3+u_2^4+u_3u_1u_2^2-u_4u_1u_2-u_5u_2^2-\sum_{j=6}^n  u_j^2
$$
to obtain a generating function $S_0$ (see \eqref{ccS_0}).
\end{proof}

We remark that if an IAS is modeled by the center-chord transform of the same Lagrangian submanifold $L\times L\subset V\times V$, then another kind of singularity of the Wigner caustic appears in the limit of vanishing chords, the so-called Wigner caustic on shell. These singularities of the Wigner caustic on shell, which are close to and include the shell $L$, differ from the singularities of the Wigner caustic off shell because, in the former case, their generating families are necessarily {\it odd} functions of their variables, so they are symmetric singularities, in this sense. These symmetric singularities that are realized for IAS of this kind (center-chord transform of $L\times L$) have been studied in \cite{DMR}.

\subsection{Realization of simple stable Legendrian singularities as special IAS}

In this section we show that any simple singularity can be realized as a special IAS, which is a main result of the paper.

\begin{theorem}\label{specialsing}
Any germ of a simple stable Legendrian singularity is realizable as a special IAS.
\end{theorem}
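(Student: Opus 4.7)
The plan is to follow closely the strategy used above for center-chord IAS: for each type of simple stable Legendrian singularity in the Arnold ADE classification, I will exhibit an explicit polynomial holomorphic map $H:\C^n\to\C$ whose imaginary part $Q(s,t)=\mathrm{Im}\,H(s+it)$ serves as the generating function $S_0$ via \eqref{eq:S_0}, and then invoke Theorem \ref{thm-Arnold} to verify the stability condition. When $S_0$ turns out to be quadratic in some variables $\hat y$, I will perform the reduction \eqref{eq:defineVquadratic}--\eqref{eq:Gfm} to pass to a generating family $G$ that matches the standard ADE normal form, exactly as was done in the center-chord proof.

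First, I would record the basic building blocks: the harmonic polynomials $H_p(s,t):=\mathrm{Im}\,(s+it)^{p}$ provide natural analogues of the monomials $v_1^{p}$ used in the center-chord construction, and more generally $\mathrm{Im}\,(z_1^{a_1}\cdots z_n^{a_n})$ yields real polynomials mixing the variables $s_j,t_j$ in a controlled way. I would tabulate these expansions in the degrees needed (up to $k+1$ for $A_k$ and $D_k$, up to nine for $E_6, E_7, E_8$).

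Second, for each simple singularity I would propose an explicit $H$, mimicking the shape of the $(S^+,S^-)$ pairs above. For instance, for $A_k$ with $k\le n+2$ a natural ansatz is $H(z)=c_0 z_1^{k+1}+\sum_{j=2}^n c_j z_j^2+\sum_{j=1}^{k-2} c'_j z_j z_1^{k-j}$, with complex coefficients $c_0,c_j,c'_j$ chosen so that the imaginary part contains the leading monomial required to generate an $A_k$-critical point in the reduced family together with the quadratic pieces in the auxiliary variables. The cases $n+2<k<2n+2$, the $D$-series and the $E$-series are handled analogously, promoting the real monomials that appear in the corresponding $(S^+,S^-)$ of the center-chord proof to imaginary parts of holomorphic monomials and adding further quadratic terms when needed to trigger the reduction \eqref{eq:defineVquadratic}--\eqref{eq:Gfm}.

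Third, I would verify the criterion of Theorem \ref{thm-Arnold}: the classes of $1,\partial G/\partial x^{(1)},\partial G/\partial \hat x,\partial G/\partial \check x$ at $\kappa$ must span $\R[[\kappa]]/\langle \partial G/\partial \kappa,G\rangle$. At the formal level the relevant monomials appearing in the partial derivatives of $G$ are the same as those arising in the center-chord case, so the check reduces to confirming that the complex coefficients in $H$ can be chosen so as to produce a non-degenerate span. The main obstacle, and the reason this is not a verbatim translation of the center-chord proof, is the holomorphicity constraint: whereas there $S^+$ and $S^-$ could be chosen independently, here $S_0=\mathrm{Im}\,H$ is governed by the Cauchy--Riemann equations, so certain real monomials can only be produced together with prescribed companions. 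Selecting $H$ so that these companions are either absorbed into the quadratic correction \eqref{eq:defineVquadratic} or do not obstruct the spanning condition is the delicate step; it is manageable because for each ADE germ the local algebra is finite-dimensional, so only finitely many equations on the coefficients of $H$ must be satisfied.
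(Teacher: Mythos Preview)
Your plan coincides with the paper's approach: the proof there is precisely a case-by-case list of explicit polynomial $H(z)$ (with coefficients chosen as specific powers of $i$) whose imaginary part $S_0$, after the quadratic reduction \eqref{eq:defineVquadratic}--\eqref{eq:Gfm} when needed, generates the required ADE germ, verified against Theorem~\ref{thm-Arnold}. Note, however, that the paper does not argue abstractly that suitable coefficients exist---your heuristic that finite-dimensionality of the local algebra makes the coefficient system solvable is not by itself a proof---it simply exhibits the $H$'s explicitly, which is the actual content you would still need to supply.
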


\begin{proof}

We explicitly describe holomorphic functions $H$ (see (\ref{holom-H})) such that the corresponding generating function $S_0=\text{Im} H$ or $S_m$ (see \eqref{S_m}) generates the simple stable Legendrian singularity of the type A-D-E.
Using formulas \eqref{G_0} and \eqref{G_m} and Theorem \ref{thm-Arnold} one can easily check that for the following holomorphic functions $H$ we obtain the following Legendrian singularities:

For $A_k$ singularity with $k\le n+2$ we take
$$
H(z)=\pm i^{3k}z_1^{k+1} - \sum_{j=2}^n  iz_j^2+\sum_{j=1}^{k-2} i^{3k+j+1}z_jz_1^{k-j}.
$$
Then a generating function is $S_0=\text{Im}H$.

For $A_k$ singularity with $n+2<k<2n+2$ we take
$$
H(z)=\pm i^{3k}z_1^{k+1} + \sum_{j=2}^{k-n-1}i^{3k+j+1}z_jz_1^{k-j+1}+ \sum_{j=1}^{n}i^{j-n-1}z_{j}z_1^{n-j+2}
$$
$$
+\frac{1}{2}\sum_{j=2}^{k-n-1}i^{2k+2j+3}z_1^{2k-2j+2}-\frac{1}{2}\sum_{j=2}^{k-n-1}iz_j^2-\sum_{j=k-n}^{n} iz_j^2
$$
Then a generating function $S_0=\text{Im}H$ is quadratic in $y_j=\text{Im}z_j$ for $j=2,\cdots,k-n-1$. So we  use  \eqref{eq:defineVquadratic}-\eqref{eq:Gfm} to obtain a generating function $S_m$.

For $D_k^{\pm}$ with $4\le k\le n+3$ take
$$
H(z)=-z_1z_2^2-iz_1^3-z_2z_1^3\pm i^{3k+2}z_1^{k-1}+\sum_{j=3}^{k-3}i^{3j}z_jz_1^{j+1}-\sum_{j=3}^{n} iz_j^2.
$$
Then a generating function is $S_0=\text{Im}H$.

For $D_k^{\pm}$ with $n+3<k<2n+2$ we take
$$
H(z)=-z_1z_2^2-iz_1^3-z_2z_1^3\pm i^{3k+2}z_1^{k-1}+\sum_{j=3}^{n}i^{3j}z_jz_1^{j+1}+\sum_{j=3}^{k-n-1}i^{3j+3n+1}z_jz_1^{j+n-1}
$$
$$
+\frac{1}{2}\sum_{j=3}^{k-n-1}i^{2j+2n+3}z_1^{2j+2n-2}-\frac{1}{2}\sum_{j=3}^{k-n-1}iz_j^2-\sum_{j=k-n}^{n} iz_j^2.
$$
Then a generating function $S_0=\text{Im}H$ is quadratic in $y_j=\text{Im}z_j$ for $j=3,\cdots,k-n-1$. So we use \eqref{eq:defineVquadratic}-\eqref{eq:Gfm} to obtain a generating function $S_m$.

For $E_6^{\pm}$ and $n\ge 3$,  we take
$$
H(z)=-z_1^3\pm iz_2^4+iz_3^2+z_1^2z_2^2+iz_2^2z_1+iz_3z_2^2-\sum_{j=4}^n  i z_j^2
$$
to obtain a generating function $S_0=\text{Im}H$.

For $E_7$, $n=3$, we take
$$
H(z)=-z_1^3+iz_1z_2^3-\frac{1}{2}iz_3^2+z_3z_2^4+z_2^4+iz_1z_2^2+iz_1z_2z_3+\frac{1}{2}iz_2^8
$$
Then a generating function $S_0=\text{Im}H$ is quadratic in $y_3=\text{Im}z_3$ and we can use \eqref{eq:defineVquadratic}-\eqref{eq:Gfm} to obtain a generating function $S_m$.

For $E_7$, $n\geq 4$, we take
$$
H(z)=-z_1^3+iz_1z_2^3-iz_3^2-iz_4^2+iz_1z_2^4+z_2^4+iz_2^2z_3+iz_1z_2z_4-\sum_{j=5}^n  i z_j^2
$$
to obtain a generating function $S_0=\text{Im}H$.

For $E_8$, $n=4$, we take
$$
H(z)=-z_1^3+z_2^5-iz_3^2-\frac{1}{2}iz_4^2+z_1z_2^3z_4+z_2^4+z_1^2z_2^2+iz_1z_2z_3+iz_2^2z_4+\frac{1}{2}iz_1^2z_2^6
$$
Then a generating function $S_0=\text{Im}H$ is quadratic in $y_4=\text{Im}z_4$. So we can  use  \eqref{eq:defineVquadratic}-\eqref{eq:Gfm} to obtain a generating function $S_m$.

For $E_8$, $n\geq 5$, we take
$$
H(z)=-z_1^3+z_2^5-iz_3^2-iz_4^2-iz_5^2+iz_1^2z_2^3+z_2^4+z_1z_2^2z_3+iz_1z_2z_4+iz_2^2z_5-\sum_{j=6}^n  i z_j^2
$$
to obtain a generating function $S_0=\text{Im}H$.
\end{proof}


\begin{thebibliography}{n}

\bibitem{Aledo09}
Aledo,J.A., Mart\'inez, A., Mil\'an, F.:{\it Affine maximal surfaces with singularities}.
Result. Math.{56}, 91-107 (2009).

\bibitem{Blaschke23} Blaschke, W.:{\it Vorlesungen uber Differentialgeometrie II. Affine  Differentialgeometrie}. Springer, Berlin, 1923.

\bibitem{Galvez07}
Aledo,J.A., Chaves,R.M.B., G\'alvez,J.A.:{\it The Cauchy Problem for Improper Affine Spheres and the Hessian One Equation}.
Trans. Amer. Math. Soc. {359 (9)}, 4183-4208 (2007).

\bibitem{Arnold}
V. I. Arnol'd, S. M. Gusein-Zade, A. N. Varchenko,
\emph{ Singularities of Differentiable Maps}, Vol. 1, Birhauser,
Boston, 1985.

\bibitem{Calabi58}
E.Calabi: {\it Improper affine hyperspheres of convex type and a generalization of a theorem of K.J\"orgens}. Michigab Math.J., { 5} (1958), 105-126.

\bibitem{Cortes00}
Baues, O., Cort\'es, V.: {\it Realisation of special K\"ahler manifolds as parabolic spheres}, Proc. Amer. math. Soc. 129(8), 2403-2407, 2000.


\bibitem{Craizer08}
Craizer,M., da Silva,M.A.H.B., Teixeira,R.C.: {\it Area distance of convex planar curves and improper affine spheres},
SIAM J.Imaging Sci., 1, 209-227, 2008.

\bibitem{Craizer11}
Craizer,M., da Silva,M.A.H.B., Teixeira,R.C.: {\it A geometric representation of improper indefinite affine spheres with singularities},
Journal of Geometry, 100(1), 65-78, 2011.

\bibitem{Craizer12}
Craizer, M.: {\it Singularities of Convex Improper Affine Maps},
Journal of Geometry {103(2)}, 207-217 (2012).

\bibitem{DMR} Domitrz, W., Manoel, M., Rios, P. de M.: {\it The Wigner caustic on shell and singularities of odd functions}, J. Geometry and Physics 71, 58-72 (2013).

\bibitem{Domitrz13}
Domitrz, W., Rios, P. de M.: {\it Singularities of equidistants and global centre symmetry sets of Lagrangian submanifolds}, to appear in Geometriae Dedicata (online open access, 2013, DOI 10.1007/s10711-013-9861-2).

\bibitem{Ishikawa06A}
Ishikawa, G., Machida, Y.:{\it Singularities of improper affine spheres and surfaces of constant gaussian curvature}, Int.J.Math., 17, 269-293, 2006.

\bibitem{Ishikawa06B}
Ishikawa, G., Machida, Y.:{\it Extra singularities of geometric solutions to Monge-Amp\`ere equation of three variables}, Kyoto University Research Information Repository, 1502: 41-53, 2006.

\bibitem{LJSX} Li,A.M., Jia,F., Simon,U., Xu,R.:{\it Affine Bernstein Problems and  Monge-Amp\`ere Equations}. World Scientific, 2010.

\bibitem{Loftin}
Loftin,J.: {\it Survey on affine spheres}. Handbook of Geometric Analysis, n.2, Adv.Lect.Math.(13), International Press, 2010.

\bibitem{Martinez05}
Martinez,A.: {\it Improper affine maps}. Mathematische Zeitschrift { 249}, 755-766 (2005).

\bibitem{Milan13} Mil\'an, F.:{\it Singularities of improper affine maps and their Hessian equation}. J.Math.Analysis Applications, {405}, 183-190  (2013).

\bibitem{Milan14}
Mil\'an, F.: {\it The Cauchy problem for indefinite improper affine spheres and their hessian equation}. Advances in Mathematics, { 251}, 22-34 (2014).


\bibitem{Nomizu94}
Nomizu,K., Sasaki,T.: {\it Affine Differential Geometry}. Cambridge University Press, 1994.


\bibitem{Poin} Poincar\'e, H.:{\it Les M\'ethodes Nouvelles de la M\'echanique C\'eleste}, Vol. 3. Gauthier-Villars, Paris, 1892.

\bibitem{RO} Rios, P. de M., Ozorio de Almaida, A.M.:{\it A variational principle for actions in symmetric symplectic spaces}. J. Geom. Phys. { 51}, 404-441 (2004).

\bibitem{Simon93A}
Simon, U., C.-P.Wang: {\it Local theory of affine $2$-spheres}. Differential Geometry: Riemannian Geometry (Los Angeles, CA, 1990), vol. 54-3 of Proceedings of Symposia in Pure Mathematics, 585-598,
AMS, 1993.

\bibitem{Wein} Weinstein, A.:{\it On the invariance of Poincar\'e's generating function for canonical transformations}, Invent. Math. { 16}, 202-213 (1972).

\end{thebibliography}
\end{document}